\newtheorem{theorem}{Theorem}
\newtheorem{lemma}{Lemma}
\theoremstyle{definition}
\newtheorem{remark}{Remark}
\newcommand{\E}{\mathsf{E}}
\theoremstyle{definition}
\newcolumntype{d}[1]{D{.}{.}{#1}}
\def\texbold#1{#1}
\def\texbold#1{\textbf{#1}}
\begin{document}

\begin{frontmatter}
\pretitle{Research Article}

\title{Properties of Poisson processes directed by compound
Poisson-Gamma subordinators}

%\author[]{\inits{}\fnms{}~\snm{}\thanksref{f1}\ead[label=e1]{}}
%\author[]{\inits{}\fnms{}~\snm{}\thanksref{f1}\thanksref{cor1}
%\ead[label=e2]{}}
%\thankstext[type=corresp,id=cor1]{Corresponding author.}
%\address[]{\institution{}, ..., \cny{}}
%\address[]{\institution{}, ..., \cny{}}

%\thankstext[id=f1]{}
\author{\inits{K.}\fnms{Khrystyna}~\snm{Buchak}\ead
[label=e1]{kristina.kobilich@gmail.com}}
\author{\inits{L.}\fnms{Lyudmyla}~\snm{Sakhno}\thanksref{cor1}\ead
[label=e2]{lms@univ.kiev.ua}}
\thankstext[type=corresp,id=cor1]{Corresponding author.}

\address{Mechanics and Mathematics Faculty,\break
\institution{Taras Shevchenko National University of Kyiv},\break
Volodymyrska 64/11,\break
01601 Kyiv, \cny{Ukraine}}

\markboth{K. Buchak, L. Sakhno}{Properties of Poisson processes
directed by compound Poisson-Gamma subordinators}

%\dedicated{}

%\markboth{Authors}{Title}
%\markboth{}{}

\begin{abstract}
In the paper we consider time-changed Poisson processes where the time
is expressed by compound Poisson-Gamma subordinators $G(N(t))$ and
derive the expressions for their hitting times. We also study the
time-changed Poisson processes where the role of time is played by the
processes of the form $G(N(t)+at)$ and by the iteration of such processes.
\end{abstract}

\begin{keywords}
\kwd{Time-change}
\kwd{Poisson process}
\kwd{Poisson-Gamma subordinator}
\kwd{hitting times}
\kwd{Bessel transforms}
\end{keywords}
\begin{keywords}[MSC2010]%
\kwd{60G50}
\kwd{60G51}
\kwd{60G55}
\end{keywords}

\received{\sday{12} \smonth{1} \syear{2018}}% Updated by
%VTEXPTS2LaTeX.exe, 18.04.2018 11:39
\revised{\sday{7} \smonth{4} \syear{2018}}% Updated by
%VTEXPTS2LaTeX.exe, 18.04.2018 11:39
\accepted{\sday{11} \smonth{4} \syear{2018}}% Updated by
%VTEXPTS2LaTeX.exe, 18.04.2018 11:39
\publishedonline{\sday{2} \smonth{5} \syear{2018}}
\end{frontmatter}

%s1 #&#
\section{Introduction}\label{sec1}

Poisson processes with randomized time have been intensively studied in
the recent literature. The most popular models of such processes are
represented by the space-fractional and time-fractional Poisson
processes where a random time-change is introduced by a stable
subordinator or its inverse correspondingly (we refer, for example, to
\cite{GOS,OP,OT,B-D'O,LST,ALM,OP2013}, to mention only few, see also
references therein).

In the paper \cite{OT} a general class of time-changed Poisson
processes $N^f(t)=N(H^{f}(t))$, $t>0$, has been introduced and studied,
where $
N(t)$ is a Poisson process and $H^{f}(t)$ is an arbitrary subordinator with
Laplace exponent $f$, independent of $N(t)$. Distributional properties,
hitting times and governing equations for such processes were presented
in \cite{OT,GOS}; the case of iterated time change and some further
generalizations of the class of process $N^f(t)$ were also considered
in \cite{GOS}. Hitting times for the iterated Poisson process were
studied in \cite{OP2,D}, and for population processes with random time
in \cite{B-O}.

In the papers \cite{BS,KS} time-changed Poisson processes were studied
for the case where the role of time is played by compound Poisson-Gamma
subordinators and their inverse processes.
In the present paper, we continue to investigate the properties of the
processes $N(G_N(t))$, $t>0$, where $G_N(t)=G(N(t))$ is the compound
Poisson-Gamma process. Some motivation for considering this class of
processes is given in \cite{BS} (see Section~\ref{sec3} therein). We derive the
expressions for the hitting times and first passage times of the
processes $N(G_N(t))$ in Sections~\ref{sec3} and \ref{sec4}. We next study in Section~\ref{sec5}
the time-change introduced by the process $G_{N+a}(t)=G(N(t)+at)$,
where the Gamma process $G(t)$ and the Poisson process with a drift
$N(t)+at$ are independent. In Section~\ref{sec6} we consider some kinds of
iterated Bessel transforms and their use for time-change in the Poisson process.

%s2 #&#
\section{Preliminaries}\label{sec2}

In this section we recall some results on time-changed Poisson
processes, which will be used in the next sections.
In the paper \cite{OT} the time-changed Poisson processes $N^f(t)=N
(H^f(t) )$, $t>0$, have been studied, where
$N(t)$ is the Poisson process with intensity $\lambda$ and $H^f(t)$ is
the subordinator with a Bern\v{s}tein function $f(u)$, independent of
$N(t)$. Their distributions are characterized as follows:
%
%e1 #&#
%
\begin{equation}
\label{Pdt} Pr \bigl\{N^f[t,t+dt)=k \bigr\}= %
\begin{cases}
dt\frac{\lambda^k}{k!}\int_{0}^{\infty}e^{-\lambda s}s^k \nu(ds)+o(dt),
k\geq1,\\
1-dt\int_{0}^{\infty} (1-e^{-\lambda s} ) \nu(ds)+o(dt), k=0,\\
\end{cases}
\end{equation}
\vspace*{-6pt}
%e2 #&#
%
\begin{equation}
\label{pkf} p_k^f(t)=P \bigl\{N^f(t)=k
\bigr\}=\frac{(-1)^k}{k!}\frac
{d^k}{du^k}e^{-tf(\lambda u)}\bigg\arrowvert_{u=1},
\end{equation}
the probability generating function of $N^f(t)$ is given by
%
%e3 #&#
%
\begin{equation}
\label{Gf} G^f(u,t)=e^{-tf(\lambda(1-u))},\quad  |u|\leq1.
\end{equation}

The time-changed Poisson processes $N^f(t)$, $t>0$, have independent
stationary increments (see, e.g., the general result on subordinated L\'
{e}vy processes given in Theorem 1.3.25 \cite{A}).

It is also shown in \cite{OT} that the probabilities of the processes
$N^f(t)$ satisfy the difference-differential equations:
%
%e4 #&#
%
\begin{equation}
\label{dtpkf} \frac{d}{dt}p_k^f(t)=-f(
\lambda)p_k^f(t)+\sum_{m=1}^{k}
\frac{\lambda
^m}{m!}p_{k-m}^f(t)\int_{0}^{\infty}e^{-s \lambda}s^m
\nu(ds), \quad  k\geq 0, \, t>0,
\end{equation}
with the usual initial conditions: $p_0^f(0)=1$, and $p_k^f(0)=0$ for
$k\ge1$. The equation \eqref{dtpkf} can be also written in the
following form (see \cite{OT}, Remark 2.3):
%
%e5 #&#
%
\begin{equation}
\label{dtpkf1} \frac{d}{dt}p_k^f(t)=-f\bigl(
\lambda(I-B)\bigr)p_k^f(t), \quad  k\geq0, \, t>0,
\end{equation}
where $B$ is the shift operator: $B p_k^f(t)=p_{k-1}^f(t)$, and it is
supposed that $p_{-1}(t)=0$.

Let $N_{1}(t)$ be the Poisson process with intensity $\lambda_{1}$,
and let $G_N(t)=G(N(t))$, $t>0$, be the compound Poisson-Gamma
subordinator with parameters $\lambda, \alpha, \beta$, that is, with
the Laplace exponent $f(u)=\lambda\beta^\alpha (\beta^{-\alpha
}-(\beta+u)^{-\alpha} )$ and the L\'evy measure $\nu(du)=\lambda
\beta^\alpha (\varGamma(\alpha) )^{-1}u^{\alpha-1}e^{-\beta u}
du$, $\lambda, \alpha, \beta>0$. In the case when $\alpha=1$ we have
the compound Poisson-exponential subordinator, which we will denote as $E_N(t)$.

Consider the time-changed process $N_{1}(G_{N}(t))=N_{1}(G(N(t)))$,
$t>0$, where the compound Poisson-Gamma process $
G_{N}(t)$ is independent of $N_{1}(t)$.

We recall the results on probability distributions of the process
$N_{1}(G_{N}(t))$, which were presented in our previous paper \cite{BS}
and will be used in the next sections.

%t1 #&#
%
\begin{theorem}\cite{BS}
Probability mass function of the process $X(t)=N_{1}(G_{N}(t))$,
$t>0$, is
given by
%
%e6 #&#
%
\begin{equation}
p_{k}(t)=P\bigl(X(t)=k\bigr)=\frac{e^{-\lambda t}}{k!}\frac{\lambda
_{1}^{k}}{(\lambda_{1}+\beta)^{k}}\sum
_{n=1}^{\infty
}\frac{ ( \lambda t\beta^{\alpha} ) ^{n}\varGamma(\alpha
n+k)}{(\lambda_{1}+\beta)^{\alpha n}n!\varGamma(\alpha n)} \quad \mbox{\text{for}}\ k\ge1 \label{pkN1GN},
\end{equation}
and
%
%e7 #&#
%
\begin{equation}
p_{0}(t)=\exp\bigl\{-tf(\lambda_1)\bigr\}=\exp \biggl
\{-\lambda t \biggl(1-\frac
{\beta^{\alpha
}}{(\lambda_{1}+\beta)^{\alpha}} \biggr) \biggr\}\label{p0NGN}.
\end{equation}
The probabilities $p_{k}(t)$, $k\geq0$, satisfy the following
system of difference-differential equations:
%
%e8 #&#
%
\begin{equation}
\frac{d}{dt}p_{k}(t)= \biggl( \frac{\lambda\beta^{\alpha}}{(\lambda
_{1}+\beta)^{\alpha}}-\lambda \biggr)
p_{k}(t)+\frac{\lambda\beta
^{\alpha}}{(\lambda_{1}+\beta)^{\alpha}}\sum_{m=1}^{k}
\frac{\lambda
_{1}^{m}}{ ( \lambda_{1}+\beta ) ^{m}}\frac{\varGamma
(m+\alpha)}{%
m!\varGamma(\alpha)}p_{k-m}(t).\label{dpkNGN}
\end{equation}
\end{theorem}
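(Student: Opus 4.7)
The plan is to exploit the representation $G_N(t) = \sum_{i=1}^{N(t)} Y_i$ of the compound Poisson--Gamma subordinator, where the jumps $Y_i$ are i.i.d. $\Gamma(\alpha,\beta)$. Conditional on $N(t)=n$, the sum $\sum_{i=1}^n Y_i$ has distribution $\Gamma(n\alpha,\beta)$, so the law of $G_N(t)$ is a mixture: a point mass $e^{-\lambda t}$ at zero, plus for each $n\ge 1$ the density
$e^{-\lambda t}\frac{(\lambda t)^n}{n!}\cdot\frac{\beta^{n\alpha}}{\Gamma(n\alpha)}u^{n\alpha-1}e^{-\beta u}$
on $(0,\infty)$. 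This mixture is the backbone of both parts of the theorem.

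For $k\ge 1$, I would condition on $G_N(t)$ to get $p_k(t)=E[(\lambda_1 G_N(t))^k e^{-\lambda_1 G_N(t)}/k!]$ (the $n=0$ atom drops out for $k\ge 1$), then integrate against the mixture density. Each inner integral reduces to a Gamma integral
$\int_0^\infty u^{k+n\alpha-1}e^{-(\lambda_1+\beta)u}\,du=\Gamma(k+n\alpha)/(\lambda_1+\beta)^{k+n\alpha}$, and after regrouping the $\lambda_1^k/(\lambda_1+\beta)^k$ and $(\lambda t\beta^\alpha)^n/(\lambda_1+\beta)^{\alpha n}$ factors, the series \eqref{pkN1GN} appears. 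For $k=0$, I would invoke that $G_N(t)$ is a L\'evy subordinator with Laplace exponent $f$, so $p_0(t)=E[e^{-\lambda_1 G_N(t)}]=e^{-tf(\lambda_1)}$, and substituting $f(\lambda_1)=\lambda(1-\beta^\alpha/(\lambda_1+\beta)^\alpha)$ produces \eqref{p0NGN}.

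For the system of equations \eqref{dpkNGN}, I would specialise the general relation \eqref{dtpkf} from Section~\ref{sec2} to the L\'evy measure $\nu(ds)=\lambda\beta^\alpha s^{\alpha-1}e^{-\beta s}/\Gamma(\alpha)\,ds$. The required moment integrals are again Gamma integrals:
\[
\int_0^\infty e^{-\lambda_1 s}s^m\nu(ds)=\frac{\lambda\beta^\alpha}{\Gamma(\alpha)}\cdot\frac{\Gamma(m+\alpha)}{(\lambda_1+\beta)^{m+\alpha}}.
\]
Multiplying by $\lambda_1^m/m!$ and factoring out $\lambda\beta^\alpha/(\lambda_1+\beta)^\alpha$ gives the coefficient $\frac{\lambda\beta^\alpha}{(\lambda_1+\beta)^\alpha}\cdot\frac{\lambda_1^m}{(\lambda_1+\beta)^m}\cdot\frac{\Gamma(m+\alpha)}{m!\Gamma(\alpha)}$ appearing in \eqref{dpkNGN}, while the $-f(\lambda_1)$ term supplies the prefactor $\lambda\beta^\alpha/(\lambda_1+\beta)^\alpha-\lambda$.

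There is no serious analytic obstacle: both parts are direct applications of the general framework \eqref{pkf}--\eqref{dtpkf} to a L\'evy measure with an explicit Gamma density. The only care required is bookkeeping---keeping track of the $(\lambda_1+\beta)^{-\alpha}$ factors and the ratios $\Gamma(m+\alpha)/(m!\,\Gamma(\alpha))$ so that the final expressions appear in exactly the stated form. An alternative derivation via \eqref{pkf}, which would demand computing $d^k/du^k\,e^{-tf(\lambda_1 u)}$ at $u=1$ through Fa\`a di Bruno's formula, is available but considerably less transparent than the conditioning argument outlined above.
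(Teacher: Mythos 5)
Your proposal is correct: the mixture representation of the law of $G_N(t)$ (atom $e^{-\lambda t}$ at zero plus $\Gamma(n\alpha,\beta)$ densities weighted by Poisson probabilities), the Gamma integrals yielding \eqref{pkN1GN}, the Laplace-exponent identity for \eqref{p0NGN}, and the specialization of the general equation \eqref{dtpkf} to the Poisson--Gamma L\'evy measure for \eqref{dpkNGN} all check out. Note that the present paper states this theorem as a recalled result from \cite{BS} and gives no proof here; your conditioning argument is the standard route consistent with the framework of Section~\ref{sec2} and with the derivation in the cited reference.
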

%
%r1 #&#
%
\begin{remark}
Probability mass function of the process $X(t)=N_{1}(G_{N}(t))$ for
$k\geq1$ can be represented with the use of the generalized Wright
function in the following form:
%
%e9 #&#
%
\begin{equation}
p_{k}(t)=P\bigl(X(t)=k\bigr)= \frac{e^{-\lambda t}}{k!}\frac{\lambda
_{1}^{k}}{(\lambda_{1}+\beta)^{k}}\,\,
{_1\varPsi_{1}} \biggl((k,\alpha),(0,\alpha),
\frac{\lambda t \beta^{\alpha
}}{(\lambda_1+\beta)^{\alpha}} \biggr),
\end{equation}
where
%
%e10 #&#
%
\begin{equation}
_{p}\varPsi_{q}\bigl((a_i,
\alpha_i), (b_j, \beta_j), z\bigr)=\sum
_{k=0}^{\infty
}\frac{\prod\limits_{i = 1}^{p}\varGamma(a_i+\alpha_i
k)}{\prod\limits_{j = 1}^{q}\varGamma(b_j+\beta_j k)}
\frac{z^k}{k!}
\end{equation}
is the generalized Wright function defined for $z\in C$, $a_i, b_i\in
C$, $\alpha_i, \beta_i\in R$, $\alpha_i, \beta_i\neq0$ and $\sum\alpha
_i - \sum\beta_i > -1$ (see, e.g., \cite{HMS}).
\end{remark}

To represent the distribution function in the next theorem we will use
the three-parameter generalized Mittag-Leffler function, which is
defined as follows:
%
%e11 #&#
%
\begin{equation}
\mathcal{E}_{\rho,\delta}^{\gamma}(z)=\sum_{k=0}^{\infty}
\frac
{\varGamma
(\gamma+k)}{\varGamma(\gamma)}\frac{z^{k}}{k!\varGamma(\rho k+\delta
)},\quad z\in C,\text{ }\rho,\delta,\gamma\in
C, \label{ML3}
\end{equation}
with $\text{ Re}(\rho)>0,\text{Re}(\delta)>0,\text{Re}(\gamma)>0$
(see, e.g., \cite{HMS}).

%t2 #&#
%
\begin{theorem}\cite{BS}
Let $X(t)=N_{1}(E_{N}(t))$. Then for $k\ge1$
%
%e12 #&#
%
\begin{equation}
p_{k}^{E}(t)=P\bigl(X(t)=k\bigr)=e^{-\lambda t}
\frac{\lambda_{1}^{k}\lambda
\beta t}{%
(\lambda_{1}+\beta)^{k+1}}\mathcal{E}_{1,2}^{k+1} \biggl(
\frac{\lambda\beta t}{\lambda_{1}+\beta} \biggr) \label{pkNEN};
\end{equation}
\vspace*{-6pt}
%e13 #&#
%
\begin{equation}
p_{0}(t)=\exp \biggl\{- \frac{\lambda\lambda_1 t }{\lambda
_{1}+\beta} \biggr\}\label{p0NEN},
\end{equation}
and the probabilities $p_{k}(t)$, $k\ge0$, satisfy the following
equation:
%
%e14 #&#
%
\begin{equation}
\frac{d}{dt}p_{k}^{E}(t)=-\lambda
\frac{\lambda_{1}}{\lambda
_{1}+\beta}%
p_{k}^{E}(t)+\frac{\lambda\beta}{\lambda_{1}+\beta}
\sum_{m=1}^{k} \biggl( \frac{\lambda_{1}}{\lambda_{1}+\beta}
\biggr) ^{m}p_{k-m}^{E}(t). \label{dpkNEN}
\end{equation}
\end{theorem}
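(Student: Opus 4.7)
The plan is to derive this as the special case $\alpha=1$ of the previous theorem, so essentially no new probabilistic work is needed; everything reduces to algebraic manipulation of the series.

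First I would specialize \eqref{p0NGN} to $\alpha=1$. Since $1-\beta/(\lambda_1+\beta)=\lambda_1/(\lambda_1+\beta)$, the expression immediately collapses to $p_0(t)=\exp\{-\lambda\lambda_1 t/(\lambda_1+\beta)\}$. Similarly, setting $\alpha=1$ in \eqref{dpkNGN} gives $\Gamma(m+1)/(m!\,\Gamma(1))=1$, and since $\lambda\beta/(\lambda_1+\beta)-\lambda=-\lambda\lambda_1/(\lambda_1+\beta)$, the difference-differential equation \eqref{dpkNEN} falls out at once.

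The main calculation is the Mittag--Leffler representation \eqref{pkNEN}. Starting from \eqref{pkN1GN} with $\alpha=1$ and using $\Gamma(n)=(n-1)!$, one has
\begin{equation*}
p_k^E(t)=\frac{e^{-\lambda t}}{k!}\frac{\lambda_1^k}{(\lambda_1+\beta)^k}\sum_{n=1}^{\infty}\frac{(\lambda t\beta)^n\,\Gamma(n+k)}{(\lambda_1+\beta)^n\,n!\,(n-1)!}.
\end{equation*}
Shifting $n\mapsto n+1$ pulls out a factor $\lambda\beta t/(\lambda_1+\beta)$ and turns the sum into $\sum_{n\ge 0}\Gamma(n+k+1)(\lambda\beta t/(\lambda_1+\beta))^n/((n+1)!\,n!)$. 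Recognising $(n+1)!=\Gamma(n+2)$ and $\Gamma(n+k+1)/\Gamma(k+1)=\Gamma(\gamma+n)/\Gamma(\gamma)$ with $\gamma=k+1$, this is exactly $k!\,\mathcal{E}_{1,2}^{k+1}(\lambda\beta t/(\lambda_1+\beta))$ by the definition \eqref{ML3} with $\rho=1$, $\delta=2$. Substituting back gives \eqref{pkNEN}.

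The only potential pitfall is keeping the index shift and the $\Gamma/\Gamma$ ratio straight so that the parameters $(\rho,\delta,\gamma)=(1,2,k+1)$ of the three-parameter Mittag--Leffler function come out correctly; once that bookkeeping is carried out, the three claims of the theorem follow simultaneously as direct corollaries of the previous theorem.
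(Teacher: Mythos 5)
Your derivation is correct: the index shift $n\mapsto n+1$, the identification $\Gamma(n+2)=(n+1)!$, and the normalization $\Gamma(n+k+1)/\Gamma(k+1)$ do produce exactly $k!\,\mathcal{E}_{1,2}^{k+1}(\lambda\beta t/(\lambda_1+\beta))$, and the $p_0$ and difference-differential equation specializations are immediate as you say. The paper states this theorem without proof (it is quoted from the authors' earlier work), but treating it as the $\alpha=1$ case of the preceding theorem is precisely the intended relationship, so your argument is the natural and complete one.
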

Figures \ref{figLefta} and \ref{figRighta} show the behavior of the
probabilities (\ref{pkN1GN}) and (\ref{pkNEN}), for various choices of
$t$ $ (t=1,2,3)$.

%f1 #&#
%
\begin{figure}
\includegraphics{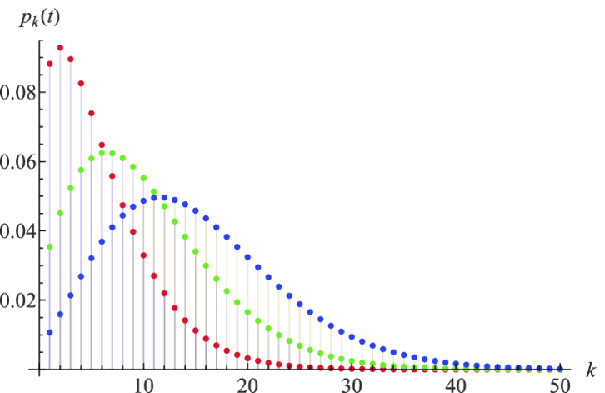}
\caption{Probabilities (\ref{pkN1GN}), for values of $\alpha=2$, $\beta
=0.8$, $ \lambda=2$, $\lambda_1=1$}
\label{figLefta}
\end{figure}

%f2 #&#
%
\begin{figure}
\includegraphics{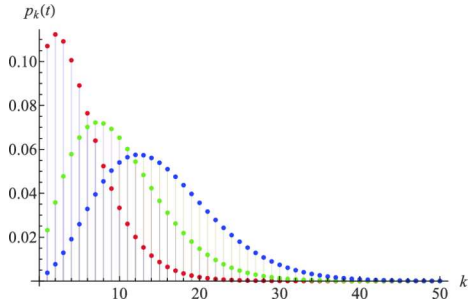}
\caption{Probabilities (\ref{pkNEN}), for values of $\beta=0.8, \lambda
=4, \lambda_1=1$}
\label{figRighta}
\end{figure}

%s3 #&#
\section{Hitting times of the subordinated Poisson process $N_1(G_N(s))$}\label{sec3}

In this section we study the hitting times for the process
$N_1(G_N(s))$ defined as
\[
T_k=\inf\bigl\{s:N_1 \bigl(G_N(s)
\bigr)=k\bigr\},\quad  k\geq1.
\]

In the next theorem we obtain the analytic expression for $P \{
T_k<\infty \}$.
%{\chr"D0}{\chr"BF}{\chr"D1}{\chr"97}{\chr"D0}{\chr"85}{\chr"D0}{
%\chr"BF}{\chr"D1}{\chr"97}{\chr"D0}{\chr"85}{\chr"D0}{\chr"BF}{
%\chr"D1}{\chr"97}{\chr"D0}{\chr"85}{\chr"D0}{\chr"BF}{\chr"D1}{
%\chr"97}{\chr"D0}{\chr"85}{\chr"D0}{\chr"BF}{\chr"D1}{\chr"97}{
%\chr"D0}{\chr"85}{\chr"D0}{\chr"BF}{\chr"D1}{\chr"97}{\chr"D0}{
%\chr"85}{\chr"D0}{\chr"BF}{\chr"D1}{\chr"97}{\chr"D0}{\chr"85} 3

%t3 #&#
%
\begin{theorem}
For the random times $T_k$ we have that
%
%e15 #&#
%
\begin{eqnarray}
P \{T_k<\infty \}=\frac{ \lambda_1^k}{k! (\lambda_1+\beta
 )^{k}} \biggl(1- \biggl(
\frac{\beta}{\lambda_{1}+\beta} \biggr)^{\alpha
} \biggr)\sum
_{n=1}^{\infty} \biggl(\frac{\beta}{\lambda_1+\beta}
\biggr)^{\alpha n}\frac{\varGamma(\alpha n+k)}{\varGamma(\alpha n)}. \label{PTkNGN}
\end{eqnarray}
\end{theorem}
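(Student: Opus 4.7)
The plan is to exploit the fact that $X(s):=N_{1}(G_{N}(s))$ is itself a compound Poisson process on the non-negative integers. The subordinator $G_{N}$ jumps only at the epochs $\tau_{1}<\tau_{2}<\cdots$ of $N$ (which form a Poisson process of rate $\lambda$), and at $\tau_{n}$ the subordinator $G_{N}$ leaps by an independent $Y_{n}\sim\mathrm{Gamma}(\alpha,\beta)$; in between, $G_{N}$, and hence $X$, is constant. The jump of $X$ at $\tau_{n}$ equals the number of $N_{1}$-arrivals during the Gamma-sized leap of $G_{N}$, which conditionally on $Y_{n}$ is $\mathrm{Poisson}(\lambda_{1}Y_{n})$. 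Consequently the jumps $J_{n}:=X(\tau_{n})-X(\tau_{n}^{-})$ are i.i.d.\ with law
\[
q_{j}:=P\{J=j\}=\frac{\beta^{\alpha}}{\Gamma(\alpha)}\int_{0}^{\infty}\frac{(\lambda_{1}y)^{j}}{j!}e^{-\lambda_{1}y}\,y^{\alpha-1}e^{-\beta y}\,dy=\frac{\lambda_{1}^{j}}{(\lambda_{1}+\beta)^{j}}\frac{\Gamma(j+\alpha)}{j!\,\Gamma(\alpha)}\left(\frac{\beta}{\lambda_{1}+\beta}\right)^{\alpha},
\]
and in particular $q_{0}=\bigl(\beta/(\lambda_{1}+\beta)\bigr)^{\alpha}$, corresponding to the event that an $N$-jump produces no $N_{1}$-arrival at all.

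Setting $S_{n}:=J_{1}+\cdots+J_{n}$, one has $X(\tau_{n})=S_{n}$, and because $X$ is non-decreasing and piecewise constant with jump times $\{\tau_{n}\}$, the event $\{T_{k}<\infty\}$ coincides with $\{\exists\, n\ge 1:S_{n}=k\}$. The identity I intend to use is the renewal-type relation
\[
v_{k}:=P\{T_{k}<\infty\}=(1-q_{0})\sum_{n=1}^{\infty}P\{S_{n}=k\},\qquad k\ge 1.
\]
This follows by writing the expected number of indices $n\ge 1$ with $S_{n}=k$ in two ways: directly it equals $\sum_{n\ge 1}P\{S_{n}=k\}$, while by the strong Markov property applied at the first such $n$, together with the fact that after hitting $k$ the walk remains at $k$ for exactly a $\mathrm{Geom}(1-q_{0})$ number of additional steps (the only way to keep $S_{n}=k$ is a string of zero jumps) before moving away permanently, it equals $v_{k}\cdot (1-q_{0})^{-1}$.

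For $P\{S_{n}=k\}$ I shall use that, conditionally on $Y_{1}+\cdots+Y_{n}=y$, the random variable $S_{n}$ is $\mathrm{Poisson}(\lambda_{1}y)$, while $Y_{1}+\cdots+Y_{n}\sim\mathrm{Gamma}(n\alpha,\beta)$. A single Gamma-type integral then gives
\[
P\{S_{n}=k\}=\frac{\lambda_{1}^{k}}{k!\,(\lambda_{1}+\beta)^{k}}\left(\frac{\beta}{\lambda_{1}+\beta}\right)^{\alpha n}\frac{\Gamma(\alpha n+k)}{\Gamma(\alpha n)}.
\]
Plugging this into the renewal identity and using $1-q_{0}=1-\bigl(\beta/(\lambda_{1}+\beta)\bigr)^{\alpha}$ recovers exactly the right-hand side of~(\ref{PTkNGN}); note that the combinatorial kernel $\Gamma(\alpha n+k)/\Gamma(\alpha n)$ appearing here is the same one that drives the closed form of $p_{k}(t)$ in~(\ref{pkN1GN}), which makes the matching transparent.

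The main technical obstacle is justifying the renewal identity in the second step. Because the atom $q_{0}$ is strictly positive, the embedded walk may sit at its current state for several $N$-ticks without actually moving, so one has to distinguish genuine visits of $(S_{n})$ to $k$ from the trivial zero-jumps, which requires a careful strong Markov bookkeeping. Once that identity is in hand, everything else is routine: the computation of $P\{S_{n}=k\}$ is a textbook Poisson--Gamma mixture, and the final series is well-defined with no subtle convergence issues on the parameter range $\lambda_{1},\alpha,\beta>0$.
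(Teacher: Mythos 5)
Your proof is correct, but it follows a genuinely different route from the paper's. The paper works infinitesimally: following Theorem 2.3 of \cite{GOS} it decomposes $P\{T_k\in ds\}$ over the size $j$ of the last jump, recognizes the resulting sum as the second term of the governing equation \eqref{dtpkf}, rewrites it as $\frac{d}{ds}p_k(s)+f(\lambda_1)p_k(s)$ using the explicit probability mass function \eqref{pkN1GN}, and then integrates term by term over $s\in(0,\infty)$. You instead discretize: you observe that $N_1(G_N(\cdot))$ is constant between the epochs of $N$, reduce $\{T_k<\infty\}$ to the event that the embedded i.i.d.\ walk $S_n=J_1+\cdots+J_n$ (with negative-binomial-type jump law $q_j$, obtained as a Poisson--Gamma mixture) ever equals $k$, and evaluate this via the visit-counting identity $\sum_{n\ge1}P\{S_n=k\}=v_k/(1-q_0)$, which is justified since the walk is non-decreasing and sits at $k$ for a $\mathrm{Geom}(1-q_0)$ number of indices once it arrives there. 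Your renewal identity is sound, and your two Gamma-mixture integrals for $q_j$ and $P\{S_n=k\}$ are correct, so the final assembly matches \eqref{PTkNGN}. What each approach buys: the paper's computation yields the full law $P\{T_k\in ds\}$ (formula \eqref{PTkdsNGN}) as a by-product, not merely the total mass, and plugs directly into the general machinery of \cite{GOS}; your argument is more elementary, avoids both the explicit pmf and the difference-differential equation, and makes structurally transparent why the answer is independent of $\lambda$ --- a fact the paper only observes a posteriori in a remark --- since the hitting event depends only on the embedded walk and not on the time parametrization.
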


\begin{proof}
Following the first lines of the proof of Theorem 2.3. from \cite
{GOS}, using the independence of increments of the process
$N_1(G_N(s))$, we write the lines \eqref{14} and \eqref{15} below, and
then, having in mind
(\ref{Pdt}), we come to the expression \eqref{dPTkNGN}:
%
%e16 #&#
%e17 #&#
%e18 #&#
%
\begin{gather}
P \{T_k\in ds \}=P \Biggl\{\bigcup_{j=1}^{k}
\bigl\{ N_1\bigl(G_N(s)\bigr)=k-j, N_1
\bigl(G_N[s,s+ds\bigr))=j \bigr\} \Biggr\}\label{14}
\\
\quad\quad\quad\quad\quad=\sum_{j=1}^{k}
P \bigl\{N_1\bigl(G_N(s)\bigr)=k-j \bigr\}P \bigl
\{N_1\bigl(G_N[s,s+ds\bigr))=j \bigr\} \label{15}
\\
=\sum_{j=1}^{k} p_{k-j}(s)
\frac{\lambda_{1}^j}{j!}\int_{0}^{\infty
}e^{-\lambda_{1} t}t^j
\nu(dt)ds.\quad\quad\quad\quad\qquad\label{dPTkNGN}
\end{gather}
Now we note that the expression \eqref{dPTkNGN} coincides with the
second term in the r.h.s. of the equation \eqref{dtpkf}, and,
therefore, we can write:
\begin{align}
\label{PTkdsNGN}
P \{T_k\in ds \}={}& \biggl\{\frac{d}{ds}p_k(s)+f(
\lambda _{1})p_k(s) \biggr\}ds
\nonumber
\\
={}& \Biggl\{\frac{1}{k!}\frac{\lambda
_{1}^{k}}{(\lambda_{1}+\beta)^{k}}\sum
_{n=1}^{\infty
}\frac{ ( \lambda\beta^{\alpha} ) ^{n}\varGamma(\alpha
n+k)}{(\lambda_{1}+\beta)^{\alpha n}n!\varGamma(\alpha n)}\frac
{d}{ds}e^{-\lambda s}
s^n
\nonumber
\\
&{}+ \biggl(\lambda-\frac{\lambda\beta^{\alpha}}{ (\lambda
_{1}+\beta )^{\alpha}} \biggr)p_k(s) \Biggr\}
ds
\nonumber
\\
={}&\frac{e^{-\lambda s}\lambda
_{1}^{k}}{k!(\lambda_{1}+\beta)^{k}}\sum_{n=1}^{\infty
}
\frac{ ( \lambda\beta^{\alpha} ) ^{n}\varGamma(\alpha
n+k)}{(\lambda_{1}+\beta)^{\alpha n}n!\varGamma(\alpha n)} \biggl(-\lambda s^n+n s^{n-1}
\nonumber
\\
&{}+s^n \biggl(\lambda-\frac{\lambda\beta^{\alpha}}{ (\lambda
_{1}+\beta )^{\alpha}} \biggr) \biggr)ds
\nonumber
\\
%%
%\vspace{-11mm}
%%
%%e19 #&#
%%
={}&\frac{e^{-\lambda s}\lambda
_{1}^{k}}{k!(\lambda_{1}+\beta)^{k}}\sum_{n=1}^{\infty
}
\frac{ ( \lambda\beta^{\alpha} ) ^{n}\varGamma(\alpha
n+k)}{(\lambda_{1}+\beta)^{\alpha n}n!\varGamma(\alpha n)} \biggl(n s^{n-1}-s^n\frac{\lambda\beta^{\alpha}}{ (\lambda_{1}+\beta
)^{\alpha}}
\biggr)ds.\nonumber\\
\end{align}
Finally, from the expression (\ref{PTkdsNGN}) we obtain:
\begin{align*}
P \{T_k<\infty \}={}&\frac{\lambda
_{1}^{k}}{k!(\lambda_{1}+\beta)^{k}}\sum
_{n=1}^{\infty
}\biggl\{\frac{ ( \lambda\beta^{\alpha} ) ^{n}\varGamma(\alpha
n+k)}{(\lambda_{1}+\beta)^{\alpha n}n!\varGamma(\alpha n)}
\nonumber
\\
& {}\times\int
_{0}^{\infty}e^{-\lambda s}\biggl(n s^{n-1}-s^n\frac{\lambda\beta^{\alpha}}{ (\lambda_{1}+\beta
)^{\alpha}} \biggr)ds\biggr\}
\nonumber
\\
={}&\frac{\lambda
_{1}^{k}}{k!(\lambda_{1}+\beta)^{k}}\sum_{n=1}^{\infty
}\biggl\{
\frac{ ( \lambda\beta^{\alpha} ) ^{n}\varGamma(\alpha
n+k)}{(\lambda_{1}+\beta)^{\alpha n}n!\varGamma(\alpha n)}
\nonumber
\\
& {}\times\biggl(n \frac
{\varGamma(n)}{\lambda^n}-\frac{\lambda\beta^{\alpha}}{ (\lambda_{1}+\beta
)^{\alpha}}\frac{\varGamma(n+1)}{\lambda^{n+1}} \biggr)\biggr\}
\nonumber
\\
={}&\frac{ \lambda_1^k}{k! (\lambda_1+\beta )^{k}} \biggl(1- \biggl(\frac{\beta}{\lambda_{1}+\beta} \biggr)^{\alpha}
\biggr)\sum_{n=1}^{\infty} \biggl(
\frac{\beta}{\lambda_1+\beta} \biggr)^{\alpha
n}\frac{\varGamma(\alpha n+k)}{\varGamma(\alpha n)}.\quad
\qedhere
\end{align*}
\end{proof}

%r2 #&#
%
\begin{remark}
We note that the expression for the hitting times \eqref{PTkNGN} does
not depend on the parameter $\lambda$, the intensity parameter of the
inner Poisson process involved in the time-changed process
\[
N_1\bigl(G_N(s)\bigr)=N_1(G\bigl(N(t)
\bigr)=N_{\lambda_1}\bigl(G_{(\alpha, \beta)}\bigl(N_\lambda(t)\bigr)
\bigr).
\]
\end{remark}
In the paper \cite{GOS} the authors show that in general case for the
process $N^f(t)=N (H^f(t) )$ the probabilities $P \{
T_k^f\in ds \}$ can be represented as follows (see the formula
(2.8) in \cite{GOS}):
%
%e20 #&#
%
\begin{equation}
P \bigl\{T_k^f\in ds \bigr\}=\sum
_{j=0}^{k-1}\frac{(-1)^j}{j!}\frac
{d^j}{du^j}e^{-sf(\lambda u)}
\bigg\arrowvert_{u=1} ds\frac{\lambda
^{k-j}}{(k-j)!}\int_{0}^{\infty}e^{-\lambda t}t^{k-j}
\nu(dt)\label{GOS2.8}
\end{equation}
To be able to use the above formula, it is necessary to calculate the
derivatives
%e21 #&#
%
\begin{equation}
\frac{d^j}{du^j}\frac{1}{f(\lambda_1 u)} \arrowvert _{u=1}.\label{der}
\end{equation}
It is noted in \cite{GOS} that evaluation of these derivatives seems
possible only for a small subset of Bern\v{s}tein functions. One
example of such functions is $f(u)=u^\alpha$, the Bern\v{s}tein
function which corresponds to the stable subordinator. We show below
another example of such functions.

In the case when $\alpha=1$ the process $G_{N}(t)$
becomes $E_{N}(t)$, the compound Poisson process with
exponentially distributed jumps that has Laplace exponent $f(u)=\lambda
\frac{u}{\beta+u}, \lambda>0, \beta>0$. For this function it is
possible to calculate the derivatives \eqref{der}, since we easily find:
%
%e22 #&#
%
\begin{equation}
\frac{d^j}{du^j}\frac{1}{f(\lambda_1 u)}= (-1)^j j!
\frac{\beta
}{\lambda\lambda_1}u^{-(j+1)},\quad  j\geq1 , \qquad  \frac{1}{f(\lambda_1 u)}=
\frac{\beta+\lambda_1 u}{\lambda\lambda_1 u},\quad  j=0.\label{dj}
\end{equation}
Therefore, we can use the formula \eqref{GOS2.8} for the process $N_1(E_N(T))$.
%{\chr"D0}{\chr"BF}{\chr"D1}{\chr"97}{\chr"D0}{\chr"85}{\chr"D0}{
%\chr"BF}{\chr"D1}{\chr"97}{\chr"D0}{\chr"85}{\chr"D0}{\chr"BF}{
%\chr"D1}{\chr"97}{\chr"D0}{\chr"85}{\chr"D0}{\chr"BF}{\chr"D1}{
%\chr"97}{\chr"D0}{\chr"85}{\chr"D0}{\chr"BF}{\chr"D1}{\chr"97}{
%\chr"D0}{\chr"85}{\chr"D0}{\chr"BF}{\chr"D1}{\chr"97}{\chr"D0}{
%\chr"85}{\chr"D0}{\chr"BF}{\chr"D1}{\chr"97}{\chr"D0}{\chr"85} 4

%t4 #&#
%
\begin{theorem}
For the random times
\[
T_k^E=\inf\bigl\{s:N_1
\bigl(E_N(s) \bigr)=k\bigr\}, \quad k\geq1,
\]
we have that
%
%e23 #&#
%
\begin{equation}
P \bigl\{T_k^E<\infty \bigr\}=\frac{\beta}{\lambda_1+\beta}.
\label{PTkNEN}
\end{equation}
\end{theorem}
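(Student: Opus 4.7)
The plan is to apply the integrated version of formula \eqref{GOS2.8} in the special case where the Bern\v{s}tein function is $f(u)=\lambda u/(\beta+u)$ and the L\'evy measure is $\nu(dt)=\lambda\beta e^{-\beta t}\,dt$ (the $\alpha=1$ specialization of $G_N$), with the outer Poisson process $N_1$ having intensity $\lambda_1$ (so that $\lambda$ in \eqref{GOS2.8} is replaced by $\lambda_1$). Interchanging the $s$-integral with the $u$-derivative, the key identity is
\[
\int_{0}^{\infty}\frac{d^{j}}{du^{j}}e^{-sf(\lambda_{1} u)}\,ds\,\Big|_{u=1}
=\frac{d^{j}}{du^{j}}\frac{1}{f(\lambda_{1} u)}\Big|_{u=1},
\]
whose value is supplied by \eqref{dj}: $(\beta+\lambda_{1})/(\lambda\lambda_{1})$ for $j=0$, and $(-1)^{j}j!\,\beta/(\lambda\lambda_{1})$ for $j\geq 1$.

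Next I would compute the elementary moment integrals of the exponential L\'evy measure,
\[
\int_{0}^{\infty}e^{-\lambda_{1} t}t^{k-j}\,\nu(dt)=\frac{\lambda\beta(k-j)!}{(\lambda_{1}+\beta)^{k-j+1}},
\]
and substitute both pieces into the $s$-integrated \eqref{GOS2.8}. The $j=0$ term simplifies to $\beta\lambda_{1}^{k-1}/(\lambda_{1}+\beta)^{k}$; for $j\geq 1$ the alternating signs $(-1)^{j}$ cancel against those in \eqref{dj} and each $j!$ cancels against its reciprocal, leaving a contribution of $\beta^{2}\lambda_{1}^{k-j-1}/(\lambda_{1}+\beta)^{k-j+1}$.

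Setting $r=\lambda_{1}/(\lambda_{1}+\beta)$ and factoring out $\beta/(\lambda_{1}+\beta)$ from the whole expression turns the remaining bracket, after the change of index $m=k-j-1$, into $r^{k-1}+(1-r)\sum_{m=0}^{k-2}r^{m}$. The geometric sum collapses this bracket to $r^{k-1}+(1-r^{k-1})=1$, yielding the claimed value $\beta/(\lambda_{1}+\beta)$.

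The individual computations are routine; the main obstacle is purely recognitional, namely spotting that the a priori $k$-dependent sum coming out of \eqref{GOS2.8} is in fact a telescoping geometric sum collapsing to a $k$-independent value. This $k$-independence is the striking feature of the answer, analogous to (but sharper than) the $\lambda$-independence noted in the remark following \eqref{PTkNGN}.
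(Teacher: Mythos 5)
Your proposal is correct and follows essentially the same route as the paper's proof: both apply formula \eqref{GOS2.8} with the derivatives \eqref{dj} and the exponential moment integrals, then collapse the resulting geometric sum to the $k$-independent value $\beta/(\lambda_1+\beta)$. The only cosmetic difference is that you integrate over $s$ before substituting, whereas the paper first writes out $P\{T_k^E\in ds\}$ and integrates at the end.
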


\begin{proof}
Using the formulas \eqref{GOS2.8} and \eqref{dj}, we obtain:
%
%e24 #&#
%
\begin{eqnarray}
P \bigl\{T_k^E\in ds \bigr\}&=&\sum
_{j=0}^{k-1}\frac{(-1)^j}{j!}\frac
{d^j}{du^j}e^{-sf(\lambda_1 u)}
\bigg\arrowvert_{u=1} ds\frac{\lambda
_1^{k-j}}{(k-j)!}
\nonumber
\\
&&{} \times\int_{0}^{\infty}e^{-\lambda_1 t}t^{k-j}
\lambda\beta e^{-\beta t} dt
\nonumber
\\
&=&\frac{\lambda\beta}{\lambda_1+\beta}\sum_{j=0}^{k-1}
\frac
{(-1)^j}{j!}\frac{\lambda_1^{k-j}}{(\lambda_1+\beta)^{k-j}}\frac
{d^j}{du^j}e^{-sf(\lambda_1 u)}
\bigg\arrowvert_{u=1}ds. \label{PTkdsNEN}\quad
\end{eqnarray}
Therefore,
\begin{align*}
P \bigl\{T_k^E<\infty \bigr\}&=\frac{\lambda\beta}{\lambda_1+\beta
}\sum
_{j=1}^{k-1}\frac{(-1)^j}{j!} \biggl(
\frac{\lambda_1}{\lambda_1+\beta
} \biggr)^{k-j}\frac{d^j}{du^j}\frac{1}{f(\lambda_1 u)}
\bigg\arrowvert _{u=1}
\nonumber
\\
+\frac{\beta\lambda_1^{k-1}}{(\lambda_1+\beta)^k}&=\frac{\lambda
\beta}{\lambda_1+\beta}\sum_{j=1}^{k-1}
\frac{(-1)^j}{j!} \biggl(\frac
{\lambda_1}{\lambda_1+\beta} \biggr)^{k-j}(-1)^j
j! \frac{\beta}{\lambda
\lambda_1}+\frac{\beta\lambda_1^{k-1}}{(\lambda_1+\beta)^k}
\\
&=\frac{\lambda_1^{k-1} \beta^2 }{(\lambda_1+\beta)^{k+1}}\sum_{j=1}^{k-1}
\biggl(\frac{\lambda_1+\beta}{\lambda_1} \biggr)^{j}+\frac
{\beta\lambda_1^{k-1}}{(\lambda_1+\beta)^k}
\\
&= \frac{ \beta}{\lambda_1+\beta} \biggl(1- \biggl(\frac{\lambda_1}{\beta
+\lambda_1} \biggr)^{k-1}
\biggr)+\frac{\beta\lambda_1^{k-1}}{(\lambda
_1+\beta)^k}=\frac{\beta}{\lambda_1+\beta}.
\qedhere
\end{align*}
\end{proof}

\begin{remark}
For all Bern\v{s}tein functions $f(u)$ it holds (see \cite{GOS}):
\[
P \bigl\{T_1^f<\infty \bigr\}<1.
\]
For our case when
$f(u)=\lambda\beta^{\alpha} (\beta^{-\alpha}- (\beta+u
)^{-\alpha} )$,
we obtain:
\[
P \{T_1<\infty \}=\frac{\lambda_1}{\lambda_1+\beta}\frac{\alpha
\beta^{\alpha}}{(\lambda_1+\beta)^{\alpha}-\beta^{\alpha}},
\]
and when
$f(u)=\lambda\frac{u}{\beta+u}, \lambda>0, \beta>0$,
we have that for all $k\geq1$:
\[
P \bigl\{T_k^E<\infty \bigr\}=\frac{\beta}{\lambda_1+\beta}<1.
\]
\end{remark}

%r4 #&#
%
\begin{remark}
It is easy to calculate the expression $P \{T_k<\infty \}$
for the case of process $N_1(G_{N}(t))$ with the parameter $\alpha=2$:
%
%e25 #&#
%
\begin{eqnarray}
P \{T_k<\infty \}=\frac{ 1}{k!} \biggl(1- \biggl(
\frac{\beta
}{\lambda_{1}+\beta} \biggr)^{2} \biggr) \biggl[\frac{\beta}{2}
\biggl(\frac
{1}{\lambda_{1}}+\frac{\lambda_1^k}{ (\lambda_{1}+2\beta
)^{k+1}} \biggr) \biggr].
\end{eqnarray}
Indeed, we notice that the sum in the r.h.s. of the formula \eqref
{PTkNGN} can be obtained by summing the even terms of the following sum:
\begin{eqnarray*}
\sum_{n=0}^{\infty}x^n
\frac{\varGamma(n+k)}{k!\varGamma(n)}=x (1-x )^{-(k+1)},
\end{eqnarray*}
where $|x|<1$.
\end{remark}

%l1 #&#
%
\begin{lemma}
The expression for the probabilities $ P \{T_k^E\in ds \}$
for the process\break$N_1(E_N(t))=N_{\lambda_1}(E_\beta(N_\lambda(t))$
can be written in the following form:
\[
P \bigl\{T_k^E\in ds \bigr\}=\frac{\lambda\beta}{\lambda_1+\beta}\sum
_{n=0}^{\infty}P \bigl\{N_{\lambda}(s)=n
\bigr\}P \bigl\{N_{\beta} \bigl(E_{\lambda_1}(k) \bigr)=n \bigr\}ds,
\]
here $ P  \{N_ {\beta}  (E_ {\lambda_1} (k)  ) = n
\} $ is the distribution of the time-changed Poisson process, where the
role of time is played by the exponential process, subscripts denote
the parameters of the processes.
\end{lemma}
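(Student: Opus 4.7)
The plan is to unwind both sides of the claim to explicit series and match them by the standard hockey-stick identity. First, I would specialize the general expression (\ref{dPTkNGN}) to $\alpha=1$, evaluating
\[
\frac{\lambda_1^j}{j!}\int_0^\infty e^{-\lambda_1 t}t^j\lambda\beta e^{-\beta t}\,dt=\frac{\lambda\beta}{\lambda_1+\beta}\biggl(\frac{\lambda_1}{\lambda_1+\beta}\biggr)^{\!j},
\]
which, after the index change $i=k-j$, rewrites $P\{T_k^E\in ds\}$ as
\[
\frac{\lambda\beta}{\lambda_1+\beta}\sum_{i=0}^{k-1}\biggl(\frac{\lambda_1}{\lambda_1+\beta}\biggr)^{\!k-i}p_i^E(s)\,ds.
\]
This is essentially the rearrangement already used to obtain (\ref{PTkdsNEN}), so no new analytic work is needed at this step.

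Next I would expand $p_i^E(s)=P\{N_{\lambda_1}(E_\beta(N_\lambda(s)))=i\}$ by conditioning on $N_\lambda(s)=n$. For $n\ge 1$ the time $E_\beta(n)$ is $\operatorname{Gamma}(n,\beta)$, and a direct computation of the Poisson--Gamma mixture gives the negative-binomial probability
\[
P\{N_{\lambda_1}(E_\beta(n))=i\}=\binom{i+n-1}{n-1}\frac{\lambda_1^i\beta^n}{(\lambda_1+\beta)^{n+i}},
\]
while for $n=0$ it degenerates to $\delta_{i,0}$. Substituting into the bracketed sum, swapping the finite sum in $i$ with the series in $n$, and collecting the powers of $\lambda_1$ and $\lambda_1+\beta$ reduces the inner sum to the purely combinatorial $\sum_{i=0}^{k-1}\binom{i+n-1}{n-1}$, which by the hockey-stick identity equals $\binom{n+k-1}{n}$.

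The last step is recognition: a short check shows that the boundary $n=0$ contribution produced by the Dirac mass at $i=0$ agrees with the $n=0$ value of the merged coefficient, so the two cases fuse into the single series
\[
\frac{\lambda\beta}{\lambda_1+\beta}\sum_{n=0}^{\infty}e^{-\lambda s}\frac{(\lambda s)^n}{n!}\binom{n+k-1}{k-1}\frac{\lambda_1^k\beta^n}{(\lambda_1+\beta)^{n+k}}.
\]
The first factor under the sum is $P\{N_\lambda(s)=n\}$, and the second is $P\{N_\beta(E_{\lambda_1}(k))=n\}$, because $E_{\lambda_1}(k)$ has the $\operatorname{Gamma}(k,\lambda_1)$ distribution and a Poisson process evaluated at an independent Gamma time has precisely this negative-binomial law. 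Putting everything together yields the claim.

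The only real obstacle is careful bookkeeping: aligning the $n=0$ degenerate slot coming from $E_\beta(0)=0$ with the $n=0$ term of the merged negative-binomial series, and invoking the hockey-stick identity in the correct orientation $\sum_{i=0}^{k-1}\binom{i+n-1}{n-1}=\binom{n+k-1}{n}$. No tool beyond the elementary Poisson--Gamma mixture and this binomial identity is required.
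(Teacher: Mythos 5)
Your proposal is correct and follows essentially the same route as the paper: both start from the specialization of \eqref{dPTkNGN} to $\alpha=1$, both reduce the inner finite sum to $\sum_{i=0}^{k-1}\binom{i+n-1}{n-1}=\binom{n+k-1}{n}$ via the hockey-stick identity, and both identify the resulting negative-binomial coefficients as $P\{N_\beta(E_{\lambda_1}(k))=n\}$. The only difference is organizational: you condition on $N_\lambda(s)=n$ and invoke the Poisson--Gamma mixture directly, and you include the $i=0$ (i.e.\ $j=k$) term in the sum from the outset, which lets the hockey-stick sum run over its full range and avoids the paper's detour of writing $C_{n+k-1}^{k-1}-1$ and recombining the leftover exponential with the $p_0$ contribution $I_2$.
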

\begin{proof}
Let us return to the formula
\eqref{dPTkNGN}. Having in mind \eqref{Pdt} and \eqref{pkN1GN},
we come to the following:
%
%e26 #&#
%
\begin{align}
&P \{T_k\in ds \}
\nonumber\\
&\quad =\sum_{j=1}^{k-1}
\frac{e^{-\lambda s}
\lambda_1^{k-j}}{(k-j)!(\lambda_1+\beta)^{k-j}}\sum_{n=1}^{\infty} \biggl(
\frac{\lambda s \beta^{\alpha}}{ (\lambda_1+\beta )^{\alpha
}} \biggr)^n\frac{\varGamma(\alpha n +k-j)}{n!\varGamma(\alpha n)}
\nonumber
\\
&{}\qquad  \times\frac{\lambda_1^j}{j!}\int_{0}^{\infty}e^{-\lambda_1
t}t^j
\lambda\beta^{\alpha} \bigl(\varGamma(\alpha) \bigr)^{-1}t^{\alpha
-1}e^{-\beta t}
dtds
\nonumber
\\
&\qquad {}+\exp \biggl\{-\lambda s \biggl(1-\frac{\beta^{\alpha
}}{(\lambda_{1}+\beta)^{\alpha}} \biggr) \biggr\}
\frac{\lambda
_1^k}{k!}\int_{0}^{\infty}e^{-\lambda_1 t}t^k
\lambda\beta^{\alpha} \bigl(\varGamma(\alpha) \bigr)^{-1}t^{\alpha-1}e^{-\beta t}
dtds
\nonumber
\\
&\quad =\frac{\lambda\beta^{\alpha}e^{-\lambda s}}{\varGamma(\alpha)}\sum_{j=1}^{k-1}
\frac{ \lambda_1^{k-j}}{(k-j)!(\lambda_1+\beta)^{k-j}}\frac
{\lambda_1^j}{j!}\frac{\varGamma(j+\alpha)}{ (\lambda_1+\beta
)^{\alpha+j}}\sum
_{n=1}^{\infty} \biggl(\frac{\lambda s \beta^{\alpha
}}{ (\lambda_1+\beta )^{\alpha}}
\biggr)^n
\nonumber
\\
&\qquad {} \times\frac{\varGamma(\alpha n \,{+}\,k\,{-}\,j)}{n!\varGamma(\alpha n)}ds
+ \exp \biggl\{-\lambda s \biggl(1-
\frac{\beta^{\alpha
}}{(\lambda_{1}\,{+}\,\beta)^{\alpha}} \biggr) \biggr\}\frac{\lambda\lambda
_1^k\beta^{\alpha}}{(\lambda_1+\beta)^{\alpha+k}}\frac{\varGamma(\alpha
+k)}{k!\varGamma(\alpha)}ds
\nonumber
\\
&\quad =\frac{\lambda\beta^{\alpha}e^{-\lambda s}}{\varGamma(\alpha)}\frac
{\lambda_1^k}{ (\lambda_1+\beta )^{\alpha+k}}\sum_{j=1}^{k-1}
\frac{ \varGamma(j+\alpha)}{(k-j)!j!}\sum_{n=1}^{\infty} \biggl(
\frac{\lambda s \beta^{\alpha}}{ (\lambda_1+\beta )^{\alpha
}} \biggr)^n\frac{\varGamma(\alpha n +k-j)}{n!\varGamma(\alpha n)}ds
\nonumber
\\
&\qquad {}+\exp \biggl\{-\lambda s \biggl(1-\frac{\beta^{\alpha
}}{(\lambda_{1}+\beta)^{\alpha}} \biggr) \biggr\}
\frac{\lambda\lambda
_1^k\beta^{\alpha}}{(\lambda_1+\beta)^{\alpha+k}}\frac{\varGamma(\alpha
+k)}{k!\varGamma(\alpha)}ds. \label{2dPTkNGN}
\end{align}

Now we take $ \alpha= 1 $ in (\ref{2dPTkNGN}) and obtain:
%
%e27 #&#
%
\begin{eqnarray}
P \bigl\{T_k^E\in ds \bigr\}&=&\frac{\lambda\beta\lambda_1^k
e^{-\lambda s}}{ (\lambda_1+\beta )^{1+k}}\sum
_{j=1}^{k-1}\frac
{ \varGamma(j+1)}{(k-j)!j!}\sum
_{n=1}^{\infty} \biggl(\frac{\lambda s \beta
}{\lambda_1+\beta}
\biggr)^n\frac{\varGamma(n +k-j)}{n!\varGamma(
n)}ds
\nonumber
\\[3pt]
&&{}+\exp \biggl\{-\lambda s \!\biggl(\!1-\frac{\beta}{\lambda_{1}+\beta} \!\biggr) \!\biggr\}
\frac{\lambda\lambda_1^k\beta}{(\lambda_1+\beta
)^{k+1}}\frac{\varGamma(k+1)}{k!}ds= I_1+I_2
\nonumber
\\[3pt]
%\end{eqnarray}
%\begin{eqnarray}
% &=& \frac{\lambda\beta\lambda_1^k e^{-\lambda s}}{\left(\lambda_1+
%\beta\right)^{1+k}}\sum_{j=1}^{k-1}\frac{1}{(k-j)!}\sum_{n=1}^{\infty}
%\left(\frac{\lambda\beta s}{\lambda_1+\beta}\right)^n\frac{
%\varGamma(n+k-j)}{n!\varGamma(n)}ds\nonumber\\
% &&+\exp\Bigl\{-\lambda s \Bigl(1-\frac{\beta}{\lambda_{1}+\beta}
%\Bigr) \Bigr\}\frac{\lambda\lambda_1^k\beta}{(\lambda_1+\beta)^{k+1}}ds
%\nonumber\\
&=&
\frac{\lambda\beta\lambda_1^k e^{-\lambda s}}{ (\lambda
_1+\beta )^{1+k}}\sum_{n=1}^{\infty} \biggl(
\frac{\lambda\beta
s}{\lambda_1+\beta} \biggr)^n\frac{1}{n!\varGamma(n)}\sum
_{j=1}^{k-1}\frac
{\varGamma(n+k-j)}{(k-j)!}ds+I_2
\nonumber
\\[3pt]
%&=& \frac{\lambda\beta\lambda_1^k e^{-\lambda s}}{\left(\lambda_1+
%\beta\right)^{1+k}}\sum_{n=1}^{\infty}\left(\frac{\lambda\beta s}{
%\lambda_1+\beta}\right)^n\frac{1}{n!}\sum_{j=1}^{k-1}
%\frac{(n+k-j-1)!}{(k-j)!(n-1)!}ds\nonumber\\
%&&+\exp\Bigl\{-\lambda s \Bigl(1-\frac{\beta}{\lambda_{1}+\beta}
%\Bigr) \Bigr\}\frac{\lambda\lambda_1^k\beta}{(\lambda_1+\beta)^{k+1}}ds
%\nonumber\\
&=&
\frac{\lambda\beta\lambda_1^k e^{-\lambda s}}{ (\lambda
_1+\beta )^{1+k}}\sum_{n=1}^{\infty} \biggl(
\frac{\lambda\beta
s}{\lambda_1+\beta} \biggr)^n\frac{1}{n!}\sum
_{l=1}^{k-1}\frac
{(n+l-1)!}{l!(n-1)!}ds
\nonumber
+I_2
\\[3pt]
&=& \frac{\lambda\beta\lambda_1^k e^{-\lambda s}}{ (\lambda
_1+\beta )^{1+k}}\sum_{n=1}^{\infty}
\biggl(\frac{\lambda\beta
s}{\lambda_1+\beta} \biggr)^n\frac{1}{n!} \Biggl[\sum
_{l=0}^{k-1}C_{n+l-1}^l-1
\Biggr]ds+I_2
\nonumber
\\[3pt]
&=& \frac{\lambda\beta\lambda_1^k e^{-\lambda s}}{ (\lambda
_1+\beta )^{1+k}}\sum_{n=1}^{\infty}
\biggl(\frac{\lambda\beta
s}{\lambda_1+\beta} \biggr)^n\frac{1}{n!}
\bigl[C_{n+k-1}^{k-1}-1 \bigr]ds+I_2
\nonumber
\\[3pt]
% &=& \frac{\lambda\beta\lambda_1^k e^{-\lambda s}}{\left(\lambda_1+
%\beta\right)^{1+k}}\sum_{n=1}^{\infty}\left(\frac{\lambda\beta s}{
%\lambda_1+\beta}\right)^n\frac{1}{n!}\frac{(n+k-1)!}{n!(k-1)!}ds\\
&=& \frac{\lambda\beta}{\lambda_1+\beta}\sum
_{n=1}^{\infty
}e^{-\lambda s}\frac{ (\lambda s )^n}{n!}
\frac{\lambda_1^k\beta
^n }{ (\lambda_1+\beta )^{n+k}}\frac{\varGamma(n+k)}{n!\varGamma
(k)}ds
\nonumber
\\[3pt]
&& {}-\frac{\lambda\beta\lambda_1^k e^{-\lambda s}}{ (\lambda_1+\beta
 )^{1+k}}\sum_{n=1}^{\infty}
\biggl(\frac{\lambda\beta s}{\lambda
_1+\beta} \biggr)^n\frac{1}{n!}ds+I_2
\nonumber
\\[3pt]
&=&\frac{\lambda\beta}{\lambda_1+\beta}\sum_{n=1}^{\infty
}e^{-\lambda s}
\frac{ (\lambda s )^n}{n!}\frac{\lambda_1^k\beta
^n }{ (\lambda_1+\beta )^{n+k}}\frac{\varGamma(n+k)}{n!\varGamma
(k)}ds
\nonumber
\\[3pt]
&& {}-\frac{\lambda\beta\lambda_1^k e^{-\lambda s}}{ (\lambda_1+\beta
 )^{1+k}} \bigl[e^{-\lambda s\frac{\beta}{\lambda_{1}+\beta
}}-1 \bigr]ds
\nonumber
\\[3pt]
&&{}+\exp \biggl\{-\lambda s \biggl(1-\frac{\beta}{\lambda_{1}+\beta} \biggr) \biggr\}
\frac{\lambda\lambda_1^k\beta}{(\lambda_1+\beta
)^{k+1}}ds
\nonumber
\\[3pt]
&=&\frac{\lambda\beta}{\lambda_1+\beta}\sum_{n=0}^{\infty
}e^{-\lambda s}
\frac{ (\lambda s )^n}{n!}\frac{\lambda_1^k\beta
^n }{ (\lambda_1+\beta )^{n+k}}\frac{\varGamma(n+k)}{n!\varGamma
(k)}ds
\nonumber
\\[3pt]
&=& \frac{\lambda\beta}{\lambda_1+\beta}\sum_{n=0}^{\infty}P
\bigl\{ N_{\lambda}(s)=n \bigr\}P \bigl\{N_{\beta}
\bigl(E_{\lambda_1}(k) \bigr)=n \bigr\}ds.\label{PTkdsNEN1}
\qedhere
\end{eqnarray}
\end{proof}

By integrating the left part of the formula (\ref{PTkdsNEN1}) by $s$ we get:
\begin{eqnarray*}
P \bigl\{T_k^E<\infty \bigr\}&=& \frac{\lambda\beta}{\lambda_1+\beta
}\sum
_{n=0}^{\infty}\int_{0}^{\infty}e^{-\lambda s}
\frac{ (\lambda
s )^n}{n!} ds P \bigl\{N_{\beta} \bigl(G_{\lambda_1}(k)
\bigr)=n \bigr\}
\nonumber
\\
&=& \frac{ \beta}{\lambda_1+\beta}\sum_{n=0}^{\infty} P
\bigl\{N_{\beta
} \bigl(G_{\lambda_1}(k) \bigr)=n \bigr\} =
\frac{ \beta}{\lambda_1+\beta},
\end{eqnarray*}
which coincides with the formula \eqref{PTkNEN}.

%s4 #&#
\section{First passage time of the subordinated Poisson process $N_1(G_N(s))$}\label{sec4}

In this section we study first passage time for the process
$N_1(G_N(s))$ defined as
\[
\tilde{T}_k=inf \bigl\{s\geq0:N_1
\bigl(G_N(s) \bigr)\geq k \bigr\}.
\]
We have:
\begin{eqnarray*}
P \{\tilde{T}_k<s \}&=& P \bigl\{N_1
\bigl(G_N(s) \bigr)\geq k \bigr\}= \sum
_{j=k}^{\infty}\int_{0}^{\infty}e^{-\lambda_1 z}
\frac{ (\lambda
_1 z )^j}{j!} P \bigl\{G_N(s)\in dz \bigr\}
\nonumber
\\
&=& e^{-\lambda s}\sum_{j=k}^{\infty}
\biggl(\frac{\lambda_1}{\lambda
_1+\beta} \biggr)^j\frac{1}{j!}\sum
_{n=1}^{\infty} \biggl(\frac{\lambda s
\beta^{\alpha}}{ (\lambda_1+\beta )^{\alpha}}
\biggr)^n\frac
{\varGamma(\alpha n+j)}{n!\varGamma(\alpha n)},\label{PTkNGN2}
\end{eqnarray*}
and, therefore,
\begin{align*}
&P \{\tilde{T}_k\in ds \}/ds
\nonumber\\
&\quad =\frac{d}{ds} e^{-\lambda s}
\sum_{j=k}^{\infty} \biggl(\frac{\lambda_1}{\lambda_1+\beta}
\biggr)^j\frac
{1}{j!}\sum_{n=1}^{\infty}
\biggl(\frac{\lambda s \beta^{\alpha}}{
(\lambda_1+\beta )^{\alpha}} \biggr)^n\frac{\varGamma(\alpha
n+j)}{n!\varGamma(\alpha n)}
\nonumber
\\
&\quad = e^{-\lambda s}\sum_{j=k}^{\infty}
\biggl(\frac{\lambda
_1}{\lambda_1+\beta} \biggr)^j\frac{1}{j!}\sum
_{n=1}^{\infty
}s^{n-1}(n-\lambda s) \biggl(
\frac{\lambda s \beta^{\alpha}}{ (\lambda
_1+\beta )^{\alpha}} \biggr)^n\frac{\varGamma(\alpha n+j)}{n!\varGamma
(\alpha n)}.
\end{align*}
When $\alpha=1$, the process $G_{N}(t)$ becomes
$E_{N}(t)$, that is, the compound Poisson process with exponentially
distributed jumps. For this process we denote the hitting times by
\[
\tilde{T}_k^E=inf \bigl\{s\geq0:N_1
\bigl(E_N(s) \bigr)\geq k \bigr\}.
\]
We obtain:
\begin{eqnarray*}
P \bigl\{\tilde{T}_k^E<s \bigr\}&=& P \bigl
\{N_1 \bigl(G_N(s) \bigr)\geq k \bigr\}= \sum
_{j=k}^{\infty}\int_{0}^{\infty}e^{-\lambda_1 z}
\frac{ (\lambda
_1 z )^j}{j!} P \bigl\{E_N(s)\in dz \bigr\}
\nonumber
\\
&=& e^{-\lambda s}\sum_{j=k}^{\infty}
\frac{\lambda_1^j\lambda\beta
s}{ (\lambda_1+\beta )^{j+1}}\mathcal{E}_{1,2}^{j+1} \biggl(
\frac{\lambda\beta s}{\lambda_{1}+\beta} \biggr).\label{PTkNEN2}
\end{eqnarray*}
Therefore,
\begin{eqnarray*}
P \bigl\{\tilde{T}_k^E\in ds \bigr\}/ds&=&
\frac{d}{ds} e^{-\lambda
s}\sum_{j=k}^{\infty}
\frac{\lambda_1^j\lambda\beta s}{ (\lambda
_1+\beta )^{j+1}}\mathcal{E}_{1,2}^{j+1} \biggl(
\frac{\lambda\beta s}{\lambda_{1}+\beta} \biggr).
\end{eqnarray*}
Taking into account that
$\frac{d}{ds} (as\mathcal{E}_{1,2}^{\gamma} (as )
)=a\mathcal{E}_{1,1}^{\gamma} (as )$,
we obtain:
\begin{eqnarray*}
P \bigl\{\tilde{T}_k^E\in ds \bigr\}/ds&=&\sum
_{j=k}^{\infty}\frac
{\lambda_1^j}{ (\lambda_1+\beta )^{j}}\frac{d}{ds}e^{-\lambda
s}
\frac{\lambda\beta s}{\lambda_1+\beta} \mathcal{E}_{1,2}^{j+1} \biggl(
\frac{\lambda\beta s}{\lambda_{1}+\beta} \biggr)
\nonumber
\\
&=&\sum_{j=k}^{\infty}\frac{\lambda_1^j}{ (\lambda_1+\beta
)^{j}}e^{-\lambda s}
\biggl[ \biggl(-\lambda\frac{\lambda\beta s}{\lambda
_1+\beta} \mathcal{E}_{1,2}^{j+1}
\biggl(\frac{\lambda\beta s}{\lambda
_{1}+\beta} \biggr)
\\
&&{}+\frac{\lambda\beta}{\lambda_1+\beta} \mathcal {E}_{1,1}^{j+1}
\biggl(\frac{\lambda\beta s}{\lambda_{1}+\beta} \biggr) \biggr) \biggr]
\nonumber
\\
&=&e^{-\lambda s}\frac{\lambda\beta}{\lambda_1+\beta} \sum_{j=k}^{\infty}
\frac{\lambda_1^j}{ (\lambda_1+\beta )^{j}}
\\
&&{} \times \biggl[ \mathcal{E}_{1,1}^{j+1} \biggl(
\frac{\lambda\beta
s}{\lambda_{1}+\beta} \biggr)-\lambda s \mathcal{E}_{1,2}^{j+1}
\biggl(\frac{\lambda\beta s}{\lambda_{1}+\beta} \biggr) \biggr].
\end{eqnarray*}

We summarize the above reasonings in the following form.
%l2 #&#
%
\begin{lemma}
For the process $N_1(G_N(s))$ the law of $\tilde{T}_k$ is given by the
following formula:
\begin{eqnarray*}
P \{\tilde{T}_k\in ds \}/ds &=& e^{-\lambda s}\sum
_{j=k}^{\infty} \biggl(\frac{\lambda_1}{\lambda_1+\beta}
\biggr)^j\frac
{1}{j!}\sum_{n=1}^{\infty}s^{n-1}(n-
\lambda s)
\\
&&{} \times \biggl(\frac{\lambda s \beta^{\alpha}}{ (\lambda_1+\beta
 )^{\alpha}} \biggr)^n\frac{\varGamma(\alpha n+j)}{n!\varGamma(\alpha
n)}\label{PTkdsNGN2}.
\end{eqnarray*}
In the case when $\alpha=1$, that is, for the process $N_1(E_N(s))$, we have:
\begin{eqnarray*}
P \bigl\{\tilde{T}_k^E\in ds \bigr\}/ds&=&e^{-\lambda s}
\frac{\lambda
\beta}{\lambda_1+\beta} \sum_{j=k}^{\infty}
\frac{\lambda_1^j}{
(\lambda_1+\beta )^{j}}
\\
&&{} \times \biggl[ \mathcal{E}_{1,1}^{j+1} \biggl(
\frac{\lambda\beta
s}{\lambda_{1}+\beta} \biggr)-\lambda s \mathcal{E}_{1,2}^{j+1}
\biggl(\frac{\lambda\beta s}{\lambda_{1}+\beta} \biggr) \biggr].
\end{eqnarray*}
\end{lemma}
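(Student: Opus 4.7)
The plan is to exploit the fact that $s \mapsto N_1(G_N(s))$ is a non-decreasing counting process, so that the event $\{\tilde{T}_k < s\}$ coincides with $\{N_1(G_N(s)) \geq k\}$. This reduces the computation of the density of $\tilde{T}_k$ to differentiating a tail sum of the probability mass function.

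First, I would write
\[
P\{\tilde{T}_k < s\} = \sum_{j=k}^{\infty} p_j(s)
\]
and substitute the explicit expression \eqref{pkN1GN}. Interchanging the two summations (legitimate by non-negativity of all terms) isolates the $s$-dependence in the single factor $s^n e^{-\lambda s}$. Differentiating termwise via $\frac{d}{ds}(s^n e^{-\lambda s}) = s^{n-1}(n-\lambda s)e^{-\lambda s}$ then yields the first displayed formula. Termwise differentiation is legitimate because, for each fixed $j$, the inner series in $n$ is the product of $e^{-\lambda s}$ and an entire power series whose coefficients are controlled on compact $s$-intervals by the Gamma-ratio $\varGamma(\alpha n + j)/\varGamma(\alpha n)$ divided by $n!$.

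For the case $\alpha = 1$, I would instead plug in the Mittag-Leffler representation \eqref{pkNEN} of $p_j^E(s)$ and reduce the problem to differentiating expressions of the form $s\,\mathcal{E}_{1,2}^{j+1}(as)$ with $a = \lambda\beta/(\lambda_1+\beta)$. The key technical identity is
\[
\frac{d}{ds}\bigl(a s\, \mathcal{E}_{1,2}^{\gamma}(a s)\bigr) = a\, \mathcal{E}_{1,1}^{\gamma}(a s),
\]
which follows from a one-line term-by-term differentiation of the series \eqref{ML3}. Combining this identity with the product rule applied to the factor $e^{-\lambda s}$ produces the second formula.

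The main obstacle I anticipate is bookkeeping rather than a conceptual difficulty: both the interchange of the summations in $j$ and $n$ and the termwise differentiation in $s$ must be justified uniformly in $j$, and for the second part one also needs the derivative identity for the three-parameter Mittag-Leffler function to hold uniformly in the parameter $\gamma = j+1$. Once these justifications are in place, the remainder of the argument is pure arithmetic.
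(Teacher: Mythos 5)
Your proposal is correct and follows essentially the same route as the paper: both identify $P\{\tilde{T}_k<s\}$ with $P\{N_1(G_N(s))\geq k\}$, arrive at the same tail sum of the probability mass function (the paper obtains it by conditioning on $G_N(s)$, which is exactly how \eqref{pkN1GN} was derived, so the intermediate expression is identical), and then differentiate termwise using $\frac{d}{ds}(s^n e^{-\lambda s})=s^{n-1}(n-\lambda s)e^{-\lambda s}$ and, for $\alpha=1$, the identity $\frac{d}{ds}(as\,\mathcal{E}_{1,2}^{\gamma}(as))=a\,\mathcal{E}_{1,1}^{\gamma}(as)$. The convergence justifications you flag are not spelled out in the paper either, so nothing is missing relative to the published argument.
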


%s5 #&#
\section{Gamma process subordinated to the Poisson process with a drift}\label{sec5}

Let $N(t)$ be the Poisson process with the intensity parameter $\lambda$.
Consider the process with a drift
%
%e28 #&#
%
\begin{equation}
N(t)+at , \quad  a>0\label{Na}.
\end{equation}

The probability law of the process $N(t)+at$ can be written in the
following form (see, e.g., \cite{B-D'O}, Theorem 1):
%
%e29 #&#
%
\begin{equation}
p_x(t)=e^{-\lambda t}\sum_{k=0}^{\infty}
\frac{(\lambda
t)^k}{k!}\delta(x-k-at),\quad  x\geq a t, a>0, t>0.\label{px}
\end{equation}
The Laplace transform of the process \eqref{Na} is:
%
%e30 #&#
%
\begin{equation}
\E e^{-u(at+N(t))}= e^{-t(a u+\lambda(1-e^{-u}))}, \label{LT1}
\end{equation}
and, correspondingly, the Laplace exponent is:
%
%e31 #&#
%
\begin{equation}
f_{N+a}(u)= a u+\lambda \bigl(1-e^{-u} \bigr).\label{fNa}
\end{equation}

Let $G(t)$ be the Gamma process with parameters $(\alpha, \beta)$, that
is, with the Laplace exponent $f(u)=\alpha\log ( 1+\frac{u}{\beta
} )$ . Denote the probability density of $G(t)$ by $h_{G(t)}(y)$.

For $a\geq0$ consider the process
%
%e32 #&#
%
\begin{equation}
G_{N+a}(t)=G\bigl(at+N(t)\bigr), \quad  t\ge0.\label{Y}
\end{equation}
Its Laplace transform has the following form:
%
%e33 #&#
%
\begin{equation}
\E e^{-u G(at+N(t))}=e^{-t  (a\alpha\log (1+\frac{u}{\beta
} )+\lambda (1- (1+\frac{u}{\beta} )^{-\alpha}
) )}, \label{LT2}
\end{equation}
therefore, the Laplace exponent and the corresponding Levy measure are
given by the following formulas:
%
%e34 #&#
%
\begin{equation}
f_{G_{N+a}}(u)=a\alpha\log \biggl(1+\frac{u}{\beta} \biggr)+\lambda
\biggl(1- \biggl(1+\frac{u}{\beta} \biggr)^{-\alpha}
\biggr),\label{fGNa}
\end{equation}
and
%
%e35 #&#
%
\begin{equation}
\nu(du)=e^{-\beta u}u^{-1} \biggl(a\alpha+\frac{\lambda\beta^{\alpha
}}{\varGamma(\alpha)}u^{\alpha}
\biggr)du, \quad \lambda>0, \alpha>0, \beta>0, \label{du}
\end{equation}
and we note that the process $G(at+N(t))$ coincides in distribution
with the sum of independent processes $\tilde{G}_N(t)+\tilde{G}(at)$,
where $\tilde{G}_N(t)$ is the compound Poisson-Gamma process and $\tilde
{G}(t)$ is the Gamma process.

The distribution of the process $G(at+N(t))$ can be calculated as follows:
\begin{eqnarray*}
P \bigl\{G_{N+a}(t)\in dy \bigr\}&=& \int_{0}^{\infty}
h_{G(s)}(y)p_s(t)ds
\\
&=&\int_{0}^{\infty}\frac{\beta^{\alpha s}}{\varGamma
(\alpha s)}y^{\alpha s-1}e^{-\beta y}e^{-\lambda t}
\sum_{k=0}^{\infty
}\frac{(\lambda t)^k}{k!}
\delta(s-k-at)ds dy
\\
&=&e^{- y \beta- \lambda t}\sum_{k=0}^{\infty}
\frac{(\lambda
t)^k}{k!}\frac{1}{y}\int_{0}^{\infty}
\frac{\beta^{\alpha s}}{\varGamma
(\alpha s)}y^{\alpha s} \delta(s-k-at)ds dy
\\
&=& e^{- y\beta-\lambda t}y^{\alpha a t-1} \beta^{\alpha a t}\sum
_{k=0}^{\infty} \frac{ (\lambda t (\beta y )^{\alpha} )^k}{k!
\varGamma(\alpha(k+at))}
\\
&=&e^{- y\beta-\lambda t}\frac{(y\beta)^{\alpha a t}}{y}\varPhi\bigl(\alpha, \alpha a t, \lambda t
(\beta y)^{\alpha}\bigr)dy,\quad  a\neq0,
\end{eqnarray*}
where in the last line the Wright
function appears:
%
%e36 #&#
%
\begin{equation}
\varPhi(\rho,\delta,z)=\sum_{k=0}^{\infty}
\frac{z^{k}}{k!\varGamma(\rho
k+\delta)},\quad z\in C,\text{ }\rho\in(-1,0)\cup(0,\infty ),\text{
}%
\delta\in C; \label{Wr}
\end{equation}
We summarize the above reasonings in the following lemma.

%l3 #&#
%
\begin{lemma}
The process $G_{N+a}(t)=G(at+N(t))$ is the L\'evy process with the
Bern\v{s}tein function and L\'evy measure given by \eqref{fGNa}, \eqref
{du}, and its probability distribution is given by
\begin{eqnarray*}
P \bigl\{G_{N+a}(t)\in dy \bigr\}&=& e^{- y\beta-\lambda t}
\frac{(y\beta
)^{\alpha a t}}{y}\varPhi\bigl(\alpha, \alpha a t, \lambda t (\beta
y)^{\alpha
}\bigr)dy,\quad a\neq0.\label{PGNady}
\end{eqnarray*}
\end{lemma}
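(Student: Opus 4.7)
My plan is to split the statement into two parts: (i) that $G_{N+a}(t)$ is a L\'evy process with the stated Bern\v{s}tein function and L\'evy measure, and (ii) that its density has the stated form. Both facts are essentially displayed in the paragraphs preceding the lemma; the task is to organize them into a clean argument.

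For (i), I would first observe that $at+N(t)$ is a subordinator (nondecreasing, with independent stationary increments) whose Laplace exponent $f_{N+a}(u)=au+\lambda(1-e^{-u})$ is recorded in \eqref{LT1}, and that the Gamma process $G(t)$ is a subordinator with Laplace exponent $f_G(u)=\alpha\log(1+u/\beta)$. By the standard Bochner--Phillips subordination theorem for L\'evy processes, $G(at+N(t))$ is again a L\'evy subordinator, and its Laplace exponent equals the composition $f_{N+a}\circ f_G$, which is precisely \eqref{fGNa}; the direct computation \eqref{LT2} gives an independent verification. To identify the L\'evy measure I would decompose $f_{G_{N+a}}(u)$ as the sum of the Bern\v{s}tein function $a\alpha\log(1+u/\beta)$ of a Gamma process with parameters $(a\alpha,\beta)$ and the Bern\v{s}tein function $\lambda(1-(1+u/\beta)^{-\alpha})$ of the compound Poisson--Gamma subordinator with parameters $(\lambda,\alpha,\beta)$. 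Since Bern\v{s}tein functions add precisely when the underlying L\'evy measures add, the L\'evy measure of $G_{N+a}$ is the sum of the two classical measures $a\alpha u^{-1}e^{-\beta u}du$ and $\lambda\beta^{\alpha}\Gamma(\alpha)^{-1}u^{\alpha-1}e^{-\beta u}du$, giving \eqref{du}. This decomposition also confirms the distributional identity $G(at+N(t))\stackrel{d}{=}\tilde G(at)+\tilde G_N(t)$ with independent summands that was noted in the excerpt.

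For (ii), I would condition on $N(t)=k$: on this event the time argument $at+N(t)=at+k$ is deterministic, so $G(at+N(t))$ reduces to the Gamma variable $G(at+k)$ with density
\[
h_{G(at+k)}(y)=\frac{\beta^{\alpha(at+k)}}{\Gamma(\alpha(at+k))}\,y^{\alpha(at+k)-1}e^{-\beta y}.
\]
Multiplying by $P\{N(t)=k\}=e^{-\lambda t}(\lambda t)^k/k!$ and summing over $k\ge 0$, then factoring $e^{-\lambda t-\beta y}(\beta y)^{\alpha at}/y$ out of the resulting series, leaves
\[
\sum_{k=0}^{\infty}\frac{(\lambda t(\beta y)^{\alpha})^{k}}{k!\,\Gamma(\alpha(at+k))},
\]
which is exactly $\Phi(\alpha,\alpha at,\lambda t(\beta y)^{\alpha})$ by the definition \eqref{Wr}. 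This is the same computation carried out just above the lemma using the delta-function density \eqref{px} of $at+N(t)$; I would simply reorganize those lines into a self-contained derivation.

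The only non-routine points are the appeal to the subordination theorem for the L\'evy property and the identification of the infinite series with the Wright function (the latter is immediate from \eqref{Wr} since $\alpha>0$ ensures convergence of the series for all $z$). Both steps are standard, so I expect no substantive obstacle: the proof is essentially bookkeeping on material already present in the excerpt.
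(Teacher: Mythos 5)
Your proposal is correct and follows essentially the same route as the paper: the density is obtained by the same conditioning-on-$N(t)=k$ computation (the paper writes it with the delta-function density \eqref{px}, which is equivalent), and the L\'evy measure \eqref{du} is read off from the decomposition of \eqref{fGNa} into the Gamma and compound Poisson--Gamma Bern\v{s}tein functions, exactly the splitting underlying the paper's remark that $G(at+N(t))\stackrel{d}{=}\tilde G_N(t)+\tilde G(at)$. No gaps.
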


%r5 #&#
%
\begin{remark} The first two moments of the processes $at+N(t)$ and
$G(at+N(t))$ are:
\begin{equation*}
\mathsf{E}\bigl(at+N(t)\bigr)= (\lambda+a)t; \qquad Var\bigl(at+N(t)\bigr)=
\lambda t;
\end{equation*}
\begin{equation*}
cov\bigl(at+N(t),as+N(s)\bigr)= \lambda\min(t,s).
\end{equation*}
\begin{equation*}
\mathsf{E}G\bigl(at+N(t)\bigr)= \alpha\beta^{-1}(\lambda+a)t;
\end{equation*}
\begin{equation*}
VarG\bigl(at+N(t)\bigr)= \alpha\beta^{-2}(\lambda\alpha+\lambda+a)t;
\end{equation*}
\begin{equation*}
cov\bigl(G\bigl(at+N(t)\bigr),G\bigl(as+N(s)\bigr)\bigr)= \alpha
\beta^{-2}(\lambda\alpha+\lambda +a)\min(t,s).
\end{equation*}
\end{remark}

When the parameter $\alpha=1$, that is, the process $G_{N+a}(t)$
becomes $E_{N+a}(t)$, we obtain:
%
%e37 #&#
%
\begin{eqnarray}
P \bigl\{E_{N+a}(t)\in dy \bigr\} &=&e^{- y\beta-\lambda t}
\beta^{\frac{at+1}{2}} \biggl(\frac{y}{\lambda
t} \biggr)^{\frac{at-1}{2}}
I_{at-1} (2 \sqrt{\lambda\beta t y} ) \label{PENady}
\end{eqnarray}
since
\begin{equation*}
\varPhi(1, a t, z)=z^{\frac{1-at}{2}}I_{at-1} (2\sqrt{z} ),
\end{equation*}
where $I_{k}$ is the modified Bessel function of the first kind (see,
e.g., \cite{Sn}):
\begin{equation*}
I_{k}(z)=\sum_{n=0}^{\infty}
\frac{ ( z/2 ) ^{2n+k}}{n!(n+k)!}. \label{Bessel}
\end{equation*}
Note that the distribution \eqref{PENady} was presented in \cite
{Sato,W,W2001} for the case when $\beta=1$.

%r6 #&#
%
\begin{remark} We recall that for the case when the shift $a=0$ the
distributions of $G_N(t)$ and $E_N(t)$ have the atom at zero and are
given by the following formulas:
\begin{align*}
P \bigl\{ G_{N}(t)\in ds \bigr\} &=e^{-\lambda t}
\delta_{\{0\}}(ds)+e^{-\lambda t-\beta s}\frac
{1}{s}\varPhi \bigl(
\alpha,0,\lambda t(\beta s)^{\alpha} \bigr) ds,
\\
P \bigl\{ E_{N}(t)\in ds \bigr\} &=e^{-\lambda t}
\delta_{\{0\}}(ds)+e^{-\lambda t-\beta s}\frac{\sqrt{%
\lambda tbs}}{s}I_{1} (
2\sqrt{\lambda t\beta s} )ds .
\end{align*}
\end{remark}

Let $N_1(t)$ be the Poisson process with intensity parameter $\lambda
_1$. Consider the time-changed process
%
%e38 #&#
%
\begin{equation}
\label{N1Gn+a} N_1\bigl(G_{N+a}(t)\bigr)=N_1
\bigl(G\bigl(N(t)+at\bigr)\bigr).
\end{equation}

%t5 #&#
%
\begin{theorem}
Probability mass function of the process $N_1(G_{N+a}(t))$ is given by
%
%e39 #&#
%
\begin{eqnarray}
p_{k}(t)&=&P \bigl\{N_1\bigl(G_{N+a}(t)
\bigr)=k \bigr\}
\nonumber
\\
&=&e^{-\lambda t}\frac{ \lambda_1^k}{k!}\sum_{n=0}^{\infty}
\frac
{(\lambda t )^n}{n!}\frac{ \beta^{\alpha(n+at)}}{\varGamma(\alpha
(n+at))}\frac{\varGamma(\alpha(n+at)+k)}{ (\lambda_1+\beta
)^{\alpha(n+at)+k}}.\label{PN1GNa}
\end{eqnarray}
The probabilities $p_k(t)$ satisfy the following system of
difference-differential equations:
%
%e40 #&#
%
\begin{align}
\label{dtpGat} \frac{d}{dt}p_k(t)&= - \biggl(a \alpha\log
\biggl(1+\frac{\lambda_1}{\beta
} \biggr)+\lambda \biggl(1- \biggl(\frac{\beta}{\lambda_1+\beta}
\biggr)^{\alpha} \biggr) \biggr)p_k(t)
\nonumber
\\
&\quad +\sum_{m=1}^{k}\frac{1}{m!}
\biggl(\frac{\lambda_1}{\lambda_1+\beta
} \biggr)^m \biggl(a \alpha\varGamma(m)+
\lambda \biggl(\frac{\beta}{\lambda
_1+\beta} \biggr)^{\alpha}\frac{\varGamma(m+\alpha)}{\varGamma(\alpha)}
\biggr)p_{k-m}(t).
\end{align}
\end{theorem}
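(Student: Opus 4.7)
The plan is to prove the two statements essentially independently: the probability mass formula by direct conditioning on the underlying Poisson process, and the governing equation by invoking the general result \eqref{dtpkf} from Section \ref{sec2} applied to the Bern\v{s}tein function and L\'evy measure of $G_{N+a}$ computed in the preceding lemma.

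For the PMF formula \eqref{PN1GNa}, I would condition first on $N(t)=n$ and then on the value of $G_{N+a}(t)$. Conditional on $N(t)=n$, the subordinator value $G(N(t)+at)$ coincides in distribution with $G(n+at)$, which is Gamma-distributed with shape $\alpha(n+at)$ and rate $\beta$. Then, since $N_1$ is Poisson with intensity $\lambda_1$ and independent of the time change, one writes
\begin{equation*}
p_k(t)=\sum_{n=0}^{\infty}e^{-\lambda t}\frac{(\lambda t)^n}{n!}\int_{0}^{\infty}e^{-\lambda_1 y}\frac{(\lambda_1 y)^k}{k!}\,\frac{\beta^{\alpha(n+at)}}{\varGamma(\alpha(n+at))}y^{\alpha(n+at)-1}e^{-\beta y}\,dy.
\end{equation*}
The inner integral is a standard Gamma integral producing a factor $\varGamma(k+\alpha(n+at))/(\lambda_1+\beta)^{k+\alpha(n+at)}$, and collecting the remaining constants yields exactly \eqref{PN1GNa}. (Equivalently, one could substitute the explicit density of $G_{N+a}(t)$ from the preceding lemma and expand the Wright function $\varPhi$ in series, but the conditioning route is the shortest.)

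For the governing equation \eqref{dtpGat}, the key observation is that $N_1(G_{N+a}(t))$ is of the form $N^f$ studied in Section \ref{sec2} with the Bern\v{s}tein function $f=f_{G_{N+a}}$ and L\'evy measure $\nu$ given by \eqref{fGNa} and \eqref{du}. Therefore, by \eqref{dtpkf},
\begin{equation*}
\frac{d}{dt}p_k(t)=-f_{G_{N+a}}(\lambda_1)\,p_k(t)+\sum_{m=1}^{k}\frac{\lambda_1^m}{m!}p_{k-m}(t)\int_{0}^{\infty}e^{-\lambda_1 s}s^m\,\nu(ds).
\end{equation*}
Evaluating $f_{G_{N+a}}(\lambda_1)$ directly from \eqref{fGNa} produces the bracketed coefficient of $p_k(t)$ in \eqref{dtpGat}. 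For the integral term, substituting the explicit density from \eqref{du} splits it into two standard Gamma integrals:
\begin{equation*}
\int_{0}^{\infty}e^{-\lambda_1 s}s^m\nu(ds)=a\alpha\,\frac{\varGamma(m)}{(\lambda_1+\beta)^m}+\frac{\lambda\beta^{\alpha}}{\varGamma(\alpha)}\,\frac{\varGamma(m+\alpha)}{(\lambda_1+\beta)^{m+\alpha}},
\end{equation*}
and after factoring out $(\lambda_1/(\lambda_1+\beta))^m$ one recovers exactly the sum in \eqref{dtpGat}.

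The proof is essentially a bookkeeping exercise; there is no substantive obstacle. The only point that requires a moment of care is to check that the general theorem of Section \ref{sec2} applies, i.e.\ that $G_{N+a}$ is indeed a L\'evy subordinator with the claimed Bern\v{s}tein characteristics — but this is exactly the content of the preceding lemma, so it may be cited. The two integrals in the $\nu$-computation must be written with the correct shifts in $m$ (because of the $s^{-1}$ factor in $\nu(ds)$), which is the one easy place to make an off-by-one slip.
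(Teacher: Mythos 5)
Your proposal is correct and follows essentially the same route as the paper: the PMF is obtained by the same conditioning/Gamma-integral computation (the paper conditions on the value of $G_{N+a}(t)$ using the density from the preceding lemma, which amounts to the same double sum you get by conditioning on $N(t)=n$ first), and the governing equation is derived exactly as you do, by plugging $f_{G_{N+a}}$ and $\nu$ from the preceding lemma into the general equation for $N^f$. Your explicit evaluation of $\int_0^\infty e^{-\lambda_1 s}s^m\,\nu(ds)$ simply spells out a step the paper leaves to the reader.
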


\begin{proof}
The probability mass function of the process $N_1(G_{N+a}(t))$ can be
obtained by standard conditioning arguments (see, e.g., the general
result for subordinated L\'{e}vy processes in \cite{Sato}, Theorem 30.1).\vadjust{\goodbreak}
\begin{eqnarray*}
p_{k}(t)&=&P \bigl\{N_1\bigl(G\bigl(at+ N(t)\bigr)
\bigr)=k \bigr\}
\\
&=& \int_{0}^{\infty}P \bigl\{N_1(s)=k
\bigr\}P \bigl\{G\bigl(at+ N(t)\bigr)\in ds \bigr\}
\nonumber
\\
&=&\int_{0}^{\infty} e^{-\lambda_1 s}
\frac{(\lambda_1 s)^k}{k!} e^{-\beta s -\lambda t} \sum_{n=0}^{\infty}
\frac{(\lambda t)^n}{n!} \frac{1}{s} \frac{\beta^{\alpha(n+at)}}{\varGamma(n+at)} s^{\alpha
(n+at)}
\nonumber
\\
&=&e^{-\lambda t}\sum_{n=0}^{\infty}
\frac{(\lambda t )^n}{n!}\frac{
\lambda_1^k}{k!}\frac{ \beta^{\alpha(n+at)}}{\varGamma(\alpha(n+at))}\frac
{\varGamma(\alpha(n+at)+k)}{ (\lambda_1+\beta )^{\alpha(n+at)+k}}.
\end{eqnarray*}
Using the formula \eqref{dtpkf}, we obtain the equations \eqref{dtpGat}.
\end{proof}

%t6 #&#
%
\begin{theorem}
Probability mass function of the process $N_1(E_{N+a}(t))$ is given by
%
%e41 #&#
%
\begin{eqnarray}
p_{k}^E(t)&=&P \bigl\{N_1
\bigl(E_{N+a}(t)\bigr)=k \bigr\}
\nonumber
\\
&=&e^{-\lambda t}\frac{\varGamma(k+at)}{k!} \biggl(\frac{ \lambda
_1}{\lambda_1+\beta}
\biggr)^k \biggl(\frac{ \beta}{\lambda_1+\beta} \biggr)^{at}
\mathcal{E}_{1,at}^{k+at} \biggl(\frac{\lambda\beta t}{\lambda
_1+\beta}
\biggr).\label{PN1ENa}
\end{eqnarray}
The probabilities $p_k^E(t)$ satisfy the following system of
difference-differential equations:
\begin{eqnarray*}
\frac{d}{dt}p_k^E(t)&=&- \biggl(a \log \biggl(1+
\frac{\lambda_1}{\beta
} \biggr)+\frac{\lambda\lambda_1}{\lambda_1+\beta} \biggr)p_k^E(t)
+\sum_{m=1}^{k} \biggl(\frac{\lambda_1}{\lambda_1+\beta}
\biggr)^m
\\
&&{} \times \biggl(\frac{a}{m} +\frac{\lambda\beta}{\lambda_1+\beta} \biggr)p_{k-m}^E(t).
\end{eqnarray*}
\end{theorem}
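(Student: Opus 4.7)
The plan is to deduce the theorem from the $\alpha=1$ specialization of the preceding theorem on $N_1(G_{N+a}(t))$, together with the general difference-differential equation \eqref{dtpkf} applied to the Bern\v{s}tein function $f_{G_{N+a}}$ from \eqref{fGNa} with $\alpha=1$. Since no independent calculation of $E_{N+a}$'s distribution is needed (it was handled via the Wright function and \eqref{PENady}), the work reduces to two algebraic identifications.

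\textbf{Step 1 (closed form).} Set $\alpha=1$ in the formula \eqref{PN1GNa}. Pulling out all factors independent of the summation index $n$, the series becomes
\[
\sum_{n=0}^{\infty}\Bigl(\tfrac{\lambda\beta t}{\lambda_1+\beta}\Bigr)^{n}\frac{\varGamma(n+at+k)}{n!\,\varGamma(n+at)},
\]
and I would compare this with the three-parameter Mittag-Leffler function \eqref{ML3} for the choice $\rho=1$, $\delta=at$, $\gamma=k+at$, which gives
\[
\varGamma(k+at)\,\mathcal{E}_{1,at}^{k+at}\Bigl(\tfrac{\lambda\beta t}{\lambda_1+\beta}\Bigr).
\]
Collecting the prefactors $e^{-\lambda t}\,k!^{-1}\,(\lambda_1/(\lambda_1+\beta))^{k}\,(\beta/(\lambda_1+\beta))^{at}$ yields \eqref{PN1ENa}.

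\textbf{Step 2 (governing equation).} Here I would invoke the general equation \eqref{dtpkf} with the driving intensity $\lambda_1$ and the Bern\v{s}tein function $f_{E_{N+a}}(u)=a\log(1+u/\beta)+\lambda u/(\beta+u)$ together with the L\'evy measure from \eqref{du} at $\alpha=1$, namely $\nu(du)=e^{-\beta u}u^{-1}(a+\lambda\beta u)\,du$. The constant term is simply $f_{E_{N+a}}(\lambda_1)=a\log(1+\lambda_1/\beta)+\lambda\lambda_1/(\lambda_1+\beta)$. For the convolution coefficients I would evaluate
\[
\int_{0}^{\infty}e^{-\lambda_1 s}s^{m}\nu(ds)=\frac{a\,\varGamma(m)}{(\lambda_1+\beta)^{m}}+\frac{\lambda\beta\,\varGamma(m+1)}{(\lambda_1+\beta)^{m+1}},
\]
so that
\[
\frac{\lambda_1^{m}}{m!}\int_{0}^{\infty}e^{-\lambda_1 s}s^{m}\nu(ds)=\Bigl(\frac{\lambda_1}{\lambda_1+\beta}\Bigr)^{m}\Bigl(\frac{a}{m}+\frac{\lambda\beta}{\lambda_1+\beta}\Bigr),
\]
which assembled according to \eqref{dtpkf} is precisely the claimed equation.

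\textbf{Main obstacle.} There is no deep obstacle; the whole proof is a reduction to $\alpha=1$ plus two bookkeeping identifications. The only mildly delicate point is to match the sum in Step 1 exactly to \eqref{ML3} with the correct triple $(\rho,\delta,\gamma)=(1,at,k+at)$ (rather than, say, $(1,at+k,\cdot)$), since different but superficially similar regroupings lead to expressions that look Mittag-Leffler-like but do not fit \eqref{ML3}. Confirming that the parameters make $\mathrm{Re}(\delta)=at>0$ and $\mathrm{Re}(\gamma)=k+at>0$ so that \eqref{ML3} is legitimately applicable also deserves a brief note, after which \eqref{PN1ENa} and the governing equation follow by direct substitution.
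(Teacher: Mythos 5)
Your proof is correct and follows the route the paper clearly intends (the theorem is stated without a separate proof as the $\alpha=1$ specialization of the preceding result): setting $\alpha=1$ in \eqref{PN1GNa} and matching the series to $\varGamma(k+at)\,\mathcal{E}_{1,at}^{k+at}$ via \eqref{ML3}, then reading off the governing equation from \eqref{dtpkf} with $f_{E_{N+a}}(\lambda_1)$ and the moments of $\nu(du)=e^{-\beta u}u^{-1}(a+\lambda\beta u)\,du$. Both algebraic identifications check out, including the coefficient $(\lambda_1/(\lambda_1+\beta))^m(a/m+\lambda\beta/(\lambda_1+\beta))$.
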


%r7 #&#
%
\begin{remark}
Distribution \eqref{PN1GNa} can be also obtained from the probability
generating function.
Using the general formula \eqref{Gf} for the process \eqref{N1Gn+a} we find:
\begin{eqnarray*}
G^F(u,t)&=&e^{-t f(\lambda_1(1-u))}=e^{-\lambda t}\sum
_{n=0}^{\infty} \frac{(\lambda t)^n}{n!} \biggl(1+
\frac{\lambda_1(1-u)}{\beta} \biggr)^{-\alpha(n+at)}
\nonumber
\\[3pt]
&=&e^{-\lambda t}\sum_{n=0}^{\infty}
\frac{(\lambda t)^n}{n!} \biggl(1+\frac{\lambda_1}{\beta} \biggr)^{-\alpha(n+at)} \biggl(1-
\frac{\lambda
_1 u}{\lambda_1+\beta} \biggr)^{-\alpha(n+at)}
\nonumber
\\[3pt]
&=&e^{-\lambda t} \biggl(\frac{\beta}{\lambda_1+\beta} \biggr)^{\alpha
at}\sum
_{n=0}^{\infty} \frac{(\lambda t)^n}{n!} \biggl(1+
\frac{\lambda
_1}{\beta} \biggr)^{-\alpha n} \sum_{k=0}^{\infty}
\frac{1}{k!} \biggl(\frac{\lambda_1 u}{\lambda_1+\beta} \biggr)^{k}
\\[3pt]
&&{} \times\frac{\varGamma(\alpha(n+at)+k)}{\varGamma(\alpha(n+at))}
\nonumber
\\[3pt]
&=&\sum_{k=0}^{\infty} u^k
\Biggl[e^{-\lambda t} \biggl(\frac{\beta
}{\lambda_1+\beta} \biggr)^{\alpha at}
\frac{1}{k!} \biggl(\frac{\lambda_1
}{\lambda_1+\beta} \biggr)^{k}\sum
_{n=0}^{\infty} \frac{1}{n!} \biggl(
\frac
{\lambda t \beta^{\alpha}}{(\lambda_1+\beta)^{\alpha}} \biggr)^n
\\[3pt]
&&{} \times\frac{\varGamma(\alpha(n+at)+k)}{\varGamma(\alpha
(n+at))} \Biggr].
\end{eqnarray*}
Therefore,
\begin{eqnarray*}
p_k(t)= e^{-\lambda t} \biggl(\frac{\beta}{\lambda_1+\beta}
\biggr)^{\alpha
at}\frac{1}{k!} \biggl(\frac{\lambda_1}{\lambda_1+\beta}
\biggr)^{k}\sum_{n=0}^{\infty}
\frac{1}{n!} \biggl(\frac{\lambda t \beta^{\alpha
}}{(\lambda_1+\beta)^{\alpha}} \biggr)^n
\frac{\varGamma(\alpha
(n+at)+k)}{\varGamma(\alpha(n+at))},\label{PiG}
\end{eqnarray*}
which coincides with \eqref{PN1GNa}.
\end{remark}

%r8 #&#
%
\begin{remark} The first two moments of the process $N_{1}(G(at+N(t)))$ are:
\begin{equation*}
\mathsf{E}N_{1}\bigl(G_{a+N}(t)\bigr)=
\lambda_1\alpha\beta^{-1}(\lambda+a)t;
\end{equation*}
\begin{equation*}
VarN_{1}\bigl(G_{a+N}(t)\bigr)= \lambda_1
\alpha\beta^{-2}\bigl(\lambda_1(\lambda \alpha+
\lambda+a)+(\lambda+a)\beta\bigr)t;
\end{equation*}
\begin{equation*}
cov\bigl(N_{1}\bigl(G_{a+N}(t)\bigr),N_{1}
\bigl(G_{a+N}(s)\bigr)\bigr)= \lambda_1\alpha\beta
^{-2}\bigl(\lambda_1(\lambda\alpha+\lambda+a)+(\lambda+a)
\beta\bigr)\min(t,s).
\end{equation*}
\end{remark}

Figures \ref{figLeft} and \ref{figRight}
%The following figures describe
show the behavior of the probabilities (\ref{PN1GNa}) and (\ref
{PN1ENa}), for various choices of $t$ $ (t=1,2,3)$.

%f3 #&#
%
\begin{figure}
\includegraphics{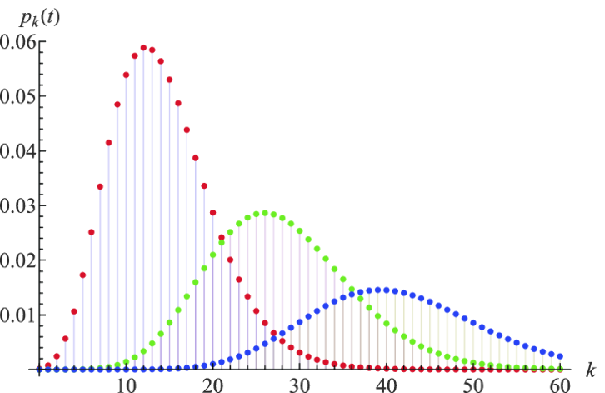}
\caption{Probabilities (\ref{PN1GNa}), for values of $a=5$, $\alpha=2$,
$\beta=0.8$, $ \lambda=1$, $\lambda_1=1$}
\label{figLeft}
\end{figure}

%f4 #&#
%
\begin{figure}
\includegraphics{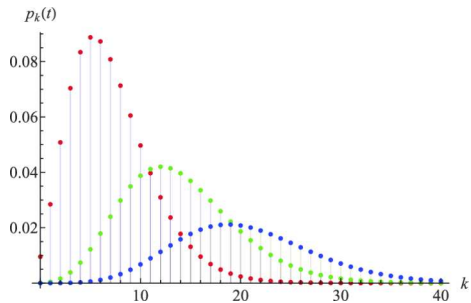}
\caption{Probabilities (\ref{PN1ENa}), for values of $a=5$, $\beta=0.8,
\lambda=1, \lambda_1=1$}
\label{figRight}
\end{figure}

%s6 #&#
\section{Iterated Bessel transforms}\label{sec6}

Consider the L\'evy process $Z_G(t)=G(at+N(X(t)))$, $t\ge0$, $a\geq0$,
where we assume that ${G(t)}$, ${N(t)}$ and $X(t)$ are independent L\'
evy processes, ${G(t)}$ is the Gamma process with parameters $(\alpha,
\beta)$, ${N(t)}$ is the Poisson process with parameter $\lambda$, and
$\nu_X(du)$ is the L\'evy measure of $X(t)$.

Using Theorem 30.1 from \cite{Sato}, we can calculate the L\'evy
measure of $Z_G(t)$. We obtain:
%
%e42 #&#
%
\begin{equation}
\label{nuZG} \nu_{Z_G}(dx)= e^{-\beta x} \Biggl(a\alpha
x^{-1}+\int_{0}^{\infty} e^{-\lambda u}
x^{-1}\varPhi\bigl(\alpha,0 ,\lambda u(\beta x)^{\alpha}\bigr)
\nu_X (du) \Biggr)dx,
\end{equation}
where $\varPhi(\rho,0 ,z)$ is the Wright function.

In the case when $\alpha=1$, that is, when the process $G(t)$ becomes
$E(t)$, the exponential process, we obtain the process
$Z_E(t)=E(at+N(X(t)))$ with the L\'evy measure given by the formula
%
%e43 #&#
%
\begin{equation}
\label{nuZE} \nu_{Z_E}(dx)= e^{-\beta x} \Biggl(ax^{-1}+
\int_{0}^{\infty} e^{-\lambda u}\sqrt{\lambda u \beta
x^{-1}}\,I_1 (2\sqrt{\lambda u \beta x}\, )
\nu_X (du) \Biggr)dx,
\end{equation}
since $\varPhi(1 ,0 ,z)=\sqrt{z}I_1(2\sqrt{z})$.

The process $Z_E(t)$ with $\beta=1$, $\lambda=1$ was considered in \cite
{Sato,W,W2001}.

In what follows we will consider the process $G(at+N(t))$ for $\alpha=1$.

Define the following iteration of the processes $E(at+N(t))$:
%
%e44 #&#
%
\begin{eqnarray}
\label{Xn} &X_0(t)=t,
\nonumber
\\
&X_1(t)=E_1\bigl(a_1t+N_1
\bigl(X_0(t)\bigr)\bigr),
\nonumber
\\
& \ldots
\\
&X_n(t)=E_n\bigl(a_nt+N_n
\bigl(X_{n-1}(t)\bigr)\bigr),
\nonumber
\end{eqnarray}
where $E_i(t), i=1,\ldots,n$, are independent exponential processes
with parameters $\beta_i, i=1,\ldots,n$, $N_i, i=1,\ldots,n$, are
independent Poisson processes with intensity parameters $\lambda_i,
i=1,\ldots,n$. For the process $X_n(t)$, we are able to calculate in
closed form its L\'evy measure $\nu_n(du)$ and the corresponding Bern\v
{s}tein function $f_n(u)$. This result is presented in the next theorem.

%t7 #&#
%
\begin{theorem}\label{thm7}
Let $X_n(t)$ be the process defined by the iteration formulas \eqref{Xn}.
Then the following holds:

\noindent(i) if $\beta_i=\lambda_i=1, i=1,\xch{\ldots}{..},n$, then
%
%e45 #&#
%
\begin{gather}
\label{nun1} \nu_n(du)=\Biggl(u^{-1}\sum
_{k=0}^{n-1}e^{-u\frac
{1}{1+k}}(a_{n-k}-a_{n-k-1})+e^{-u\frac{1}{n}}
\frac{1}{n^2}\Biggr)du,
\\
%
%e46 #&#
%
\label{fn1} f_n(u)=\sum_{k=0}^{n-1}(a_{n-k}-a_{n-k-1})
\log \bigl(1+u(1+k) \bigr)+\frac{u}{1+nu};
\end{gather}

\noindent(ii) if $\beta_i=\beta\neq1,\lambda_i=\lambda\neq1, i=1,\xch
{\ldots}{..},n$, then
%
%e47 #&#
%
\begin{equation}
\label{nun3} \nu_n(du)=\Biggl(u^{-1}\sum
_{k=0}^{n-1}e^{-u\frac{\beta^{k+1}}{\gamma(\lambda
,\beta,k+1)}}(a_{n-k}-a_{n-k-1})+e^{-u\frac{\beta^n}{\gamma(\lambda
,\beta,n)}}
\frac{(\lambda\beta)^n}{(\gamma(\lambda,\beta,n))^2}\Biggr)du,
\end{equation}
%
%e48 #&#
%
\begin{equation}
\label{fn3} f_n(u)=\sum_{k=0}^{n-1}(a_{n-k}-a_{n-k-1})
\log \biggl(1+u\frac{\gamma
(\lambda,\beta,k+1)}{\beta^{k+1}} \biggr)+\lambda^n
\frac{u}{\beta
^n+u\gamma(\lambda,\beta,n)},
\end{equation}
where $\gamma(\lambda,\beta,m)=\sum_{j=1}^{m}\lambda^{m-j}\beta
^{j-1}=(\lambda^m-\beta^m)(\lambda-\beta)^{-1}$, $a_0=0$.
\end{theorem}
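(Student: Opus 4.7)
The plan is to reduce the iteration \eqref{Xn} to a single recursion for the Bern\v{s}tein function $f_n$ of $X_n(t)$, solve the recursion in closed form (which will yield both \eqref{fn1} and \eqref{fn3}), and then read off the L\'evy measure summand by summand from the L\'evy--Khintchine representation. Case (i) will then appear as the specialization $\lambda=\beta=1$ of case (ii).

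First I would use that $X_n(t)=E_n(S_n(t))$ with $S_n(t)=a_n t+N_n(X_{n-1}(t))$ a sum of independent subordinators. The Bern\v{s}tein function of a time-changed L\'evy process is the composition of the two Bern\v{s}tein functions (Theorem~30.1 of \cite{Sato}), so with $f_{E_n}(u)=\log(1+u/\beta_n)$ and $f_{N_n}(u)=\lambda_n(1-e^{-u})$ one obtains
\[
f_n(u)=a_n\log(1+u/\beta_n)+f_{n-1}\bigl(\lambda_n(1-e^{-\log(1+u/\beta_n)})\bigr)=a_n\log(1+u/\beta_n)+f_{n-1}\biggl(\frac{\lambda_n u}{\beta_n+u}\biggr),
\]
with initial value $f_0(u)=u$ since $X_0(t)=t$.

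Setting $\beta_i=\beta$, $\lambda_i=\lambda$ and iterating, this gives
\[
f_n(u)=\sum_{k=0}^{n-1}a_{n-k}\log\bigl(1+T^{k}(u)/\beta\bigr)+T^{n}(u),\qquad T(u)=\frac{\lambda u}{\beta+u}.
\]
A straightforward induction based on the identity $\gamma(\lambda,\beta,k+1)=\beta\gamma(\lambda,\beta,k)+\lambda^{k}$ (with $\gamma(\lambda,\beta,0)=0$) shows $T^{k}(u)=\lambda^{k}u/(\beta^{k}+\gamma(\lambda,\beta,k)u)$, which already matches the last term of \eqref{fn3}. The same identity produces the telescoping relation
\[
\log\bigl(1+T^{k}(u)/\beta\bigr)=\log\bigl(1+u\gamma(\lambda,\beta,k+1)/\beta^{k+1}\bigr)-\log\bigl(1+u\gamma(\lambda,\beta,k)/\beta^{k}\bigr),
\]
and Abel summation applied to $\sum_{k=0}^{n-1}a_{n-k}(L_{k+1}-L_{k})$, with $L_{k}(u)=\log(1+u\gamma(\lambda,\beta,k)/\beta^{k})$ and $L_{0}\equiv0$, converts the coefficients $a_{n-k}$ into the differences $a_{n-k}-a_{n-k-1}$ appearing in \eqref{fn3}, using the convention $a_{0}=0$. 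Substituting $\lambda=\beta=1$, so that $\gamma(1,1,k)=k$, reduces \eqref{fn3} to \eqref{fn1}.

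Finally, the L\'evy measure is read off term by term from $f(u)=\int_{0}^{\infty}(1-e^{-us})\nu(ds)$. Frullani's formula $\log(1+cu)=\int_{0}^{\infty}(1-e^{-us})s^{-1}e^{-s/c}ds$ applied to each logarithmic summand produces the $s^{-1}\exp\bigl(-s\beta^{k+1}/\gamma(\lambda,\beta,k+1)\bigr)$ contributions to \eqref{nun3}, while $cu/(d+u)=\int_{0}^{\infty}(1-e^{-us})\,cd\,e^{-ds}ds$ applied to the residual $\lambda^{n}u/(\beta^{n}+\gamma(\lambda,\beta,n)u)$ yields the last summand with coefficient $(\lambda\beta)^{n}/\gamma(\lambda,\beta,n)^{2}$; \eqref{nun1} again follows by specialization. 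The main obstacle is the algebraic bookkeeping: verifying the $\gamma$-recurrence, iterating the M\"obius-type map $T$, and carrying out the Abel summation without sign or off-by-one errors; once these ingredients are in place the remainder is a routine calculation.
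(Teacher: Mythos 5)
Your proof is correct, but it takes a genuinely different route from the paper's. The paper argues by induction directly on the L\'evy measure: assuming \eqref{nun3} for $n=m$, it substitutes $\nu_m$ into the Bessel-transform formula \eqref{nuZE} for the L\'evy measure of $E(at+N(X(t)))$, expands the modified Bessel function $I_1$ as a power series, evaluates the resulting Gamma integrals term by term, and resums the exponential series; it carries this out only for case (ii), and \eqref{fn3} is not verified separately (it follows by integrating $1-e^{-us}$ against $\nu_n$). You instead work at the level of Bern\v{s}tein functions: the composition rule gives the recursion $f_n(u)=a_n\log(1+u/\beta)+f_{n-1}\bigl(\lambda u/(\beta+u)\bigr)$, which you solve in closed form by identifying the iterates of the M\"obius map $T(u)=\lambda u/(\beta+u)$ through the recurrence $\gamma(\lambda,\beta,k+1)=\beta\gamma(\lambda,\beta,k)+\lambda^{k}$; the telescoping of the logarithms plus Abel summation with $a_0=0$ explains exactly where the differences $a_{n-k}-a_{n-k-1}$ come from, and \eqref{nun3} is then read off via Frullani's integral and the elementary representation of $cu/(d+u)$, with case (i) obtained as the specialization $\lambda=\beta=1$. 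I checked the key algebraic steps (the closed form of $T^{k}$, the telescoping identity, the Abel summation, and the two integral representations) and they all go through. Your route avoids the Bessel-series manipulations entirely, treats both cases and both formulas \eqref{nun1}--\eqref{fn3} uniformly in one pass, and makes the combinatorial structure of the answer transparent; the paper's route stays within the L\'evy-measure calculus that formula \eqref{nuZE} naturally provides and exhibits the cancellations at the level of the measure itself.
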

\begin{proof}
We present the proof for the case $\beta_i=\beta\neq1,\lambda_i=\lambda
\neq1, i=1,\ldots,n$. We prove the claimed results by induction.

For $n=1$ the formula (\ref{nun3}) holds. Suppose that the result is
true for $n=m$ ($m\geq1$), that is,
\begin{equation*}
\nu_m(du)=\Biggl(u^{-1}\sum_{k=0}^{m-1}e^{-u\frac{\beta
^{k+1}}{\gamma(\lambda,\beta,k+1)}}(a_{m-k}-a_{m-k-1})+e^{-u\frac{\beta
^m}{\gamma(\lambda,\beta,m)}}
\frac{(\lambda\beta)mn}{(\gamma(\lambda
,\beta,m))^2}\Biggr)du.
\end{equation*}
We need to show that (\ref{nun3}) holds for $n=m+1$. We calculate $\nu
_{m+1}(dx)$:
\begin{align*}
\nu_{m+1}(dx)&=e^{-\beta x} \Biggl(a_{m+1}x^{-1}+
\int_{0}^{\infty
}e^{-\lambda u}\sqrt{\lambda u \beta
x^{-1}}I_1 (2\sqrt{\lambda u \beta x} )
\nu_m(du) \Biggr)dx
\nonumber
\\
&=e^{-\beta x} \Biggl(a_{m+1}x^{-1}+\sum
_{n=0}^{\infty} \frac{ (\lambda\beta )^{n+1}x^n}{n!(n+1)!} \int
_{0}^{\infty}e^{-\lambda u}u^{n+1}
\nu_m(du) \Biggr)dx
\nonumber
\\
&=e^{-\beta x}\Biggl(a_{m+1}x^{-1}+\sum
_{n=0}^{\infty} \frac{ (\lambda\beta )^{n+1}x^n}{n!(n+1)!} \int
_{0}^{\infty
}e^{-\lambda u}u^{n+1}
\nonumber
\\
&\quad {} \times \Biggl(u^{-1}\sum_{k=0}^{m-1}e^{-u\frac{\beta^{k+1}}{\gamma
(\lambda,\beta,k+1)}}(a_{m-k}-a_{m-k-1})
\nonumber\\
&\quad
+e^{-u\frac{\beta^m}{\gamma(\lambda,\beta,m)}}\frac{(\lambda\beta
)^m}{\gamma(\lambda,\beta,m)^2}\Biggr)du \Biggr)dx
\nonumber
\\
% \end{eqnarray*}
%\begin{eqnarray*}
&=e^{-\beta x}\Biggl(a_{m+1}x^{-1}
\nonumber
\\
&\quad {}+\!\sum_{n=0}^{\infty}\!
\frac{ (\lambda\beta )^{n+1}x^n}{(n+1)!} \!\sum_{k=0}^{m-1}\!(a_{m-k}-a_{m-k-1})
\biggl(\!\lambda+\frac{\beta^{k+1}}{\gamma
(\lambda,\beta,k+1)}\! \biggr)^{\!-(n+1)}
\nonumber
\\
&\quad {}+\sum_{n=0}^{\infty}
\frac{ (\lambda\beta )^{n+1}x^n}{n!}\frac{ (\lambda
\beta )^m}{ (\gamma(\lambda,\beta,m) )^2} \biggl(\lambda +\frac{\beta^{m}}{\gamma(\lambda,\beta,m)}
\biggr)^{-(n+2)} \Biggr)dx
\\
&=e^{-\beta x}\Biggl(a_{m+1}x^{-1}
\\
&\quad {}+x^{-1}\sum_{k=0}^{m-1}(a_{m-k}-a_{m-k-1})
\bigl(e^{ x
\frac{\lambda\beta\gamma(k+1)}{\lambda\gamma(\lambda,\beta,k+1)+\beta
^{k+1}}}-1 \bigr)
\nonumber
\\
&\quad {}+\frac{ (\lambda\beta )^{m+1}}{ (\lambda
\gamma(\lambda,\beta,m)+\beta^{m} )^2}e^{\lambda\beta x \frac
{\gamma(\lambda,\beta,m)}{\lambda\gamma(\lambda,\beta,m)+\beta
^{m}}}\Biggr)dx
\nonumber
\\
% \end{eqnarray*}
% \begin{eqnarray*}
&=\Biggl(e^{-\beta x}x^{-1} \Biggl(a_{m+1}-
\sum_{k=0}^{m-1}(a_{m-k}-a_{m-k-1})
\Biggr)
\\
&\quad {}+e^{-x \frac{\beta^{k+2}}{\gamma(\lambda,\beta,k+2)}}x^{-1}\sum_{k=0}^{m-1}(a_{m-k}-a_{m-k-1})
\nonumber
\\
&\quad {}+e^{-x \frac{\beta^{m+1}}{\gamma(\lambda,\beta,m+1)}} \frac{
(\lambda\beta )^{m+1}}{ (\gamma(\lambda,\beta,m+1)
)^2}\Biggr)dx
\nonumber
\\
&=\Biggl(x^{-1}\sum_{k=0}^{m}e^{-x\frac{\beta^{k+1}}{\gamma(\lambda,\beta
,k+1)}}(a_{m+1-k}-a_{m-k})
\nonumber\\
&\quad +e^{-x \frac{\beta^{m+1}}{\gamma(\lambda,\beta
,m+1)}}
\frac{ (\lambda\beta )^{m+1}}{ (\gamma(\lambda
,\beta,m+1) )^2}\Biggr)dx
\end{align*}
Therefore, the formula (\ref{nun3}) is true.
\end{proof}

%r9 #&#
%
\begin{remark}
If $X_0=\lambda t$ in (\ref{Xn}) and $\beta_i=\lambda_i=1$ or $\beta
_i=1, \lambda_1=\lambda, \lambda_i=1, i=2,3,\ldots,n$, then
\begin{equation*}
\label{nun31} \nu_n(du)=\Biggl(u^{-1}\sum
_{k=0}^{n-1}e^{-u\frac
{1}{1+k}}(a_{n-k}-a_{n-k-1})+e^{-u\frac{1}{n}}
\frac{\lambda}{n^2}\Biggr)du,
\end{equation*}
\vspace*{-6pt}
\begin{equation*}
\label{fn31} f_n(u)=\sum_{k=0}^{n-1}(a_{n-k}-a_{n-k-1})
\log \bigl(1+u(1+k) \bigr)+\lambda\frac{u}{1+nu};
\end{equation*}
\end{remark}

%r10 #&#
%
\begin{remark}\label{rem10}
Formulas (\ref{nun1})--(\ref{fn3}) become significantly simpler in the
case when $a_1=a_2=\cdots=a_n=a$, that is, when the shift is the same
at each step. We obtain:

(i) if $\beta_i=\lambda_i=1, i=1,\xch{\ldots}{..},n$, then
%
%e49 #&#
%
\begin{equation}
\label{nun21} \nu_n(du)=u^{-1}e^{-u\frac{1}{n}} \biggl(a+
\frac{1}{n^2}u \biggr)du,
\end{equation}
%
%e50 #&#
%
\begin{equation}
\label{fn21} f_n(u)=a\log (1+nu )+\frac{u}{1+nu};
\end{equation}

(ii) if $\beta_i=\beta\neq1,\lambda_i=\lambda\neq1, i=1,\xch{\ldots
}{..},n$, then
%
%e51 #&#
%
\begin{equation}
\label{nun23} \nu_n(du)=u^{-1}e^{-u\frac{\beta^n}{\gamma(\lambda,\beta,n)}} \biggl(a+
\frac{(\lambda\beta)^n}{(\gamma(\lambda,\beta,n))^2}u \biggr)du,
\end{equation}
%
%e52 #&#
%
\begin{equation}
\label{fn23} f_n(u)=a\log \biggl(1+u\frac{\gamma(\lambda,\beta,n)}{\beta^{n}} \biggr)+
\lambda^n\frac{u}{\beta^n+u\gamma(\lambda,\beta,n)},
\end{equation}
where $\gamma(\lambda,\beta,n)=\sum_{j=1}^{n}\lambda^{n-j}\beta
^{j-1}=(\lambda^n-\beta^n)(\lambda-\beta)^{-1}$.

We can conclude that the process $X_n(t)$, which is given by the
formula (\ref{Xn}) as some kind of $n$-th iteration of processes
$E(N(t)+at)$, under the assumption that $E(t)$ and $N(t)$ have the
parameters $\beta=\lambda=1$, coincides in distribution with the
process $E_{1/n}(N_{1/n}(t)+at)$:
\[
X_n(t)\stackrel{d} {=}E_{1/n}\bigl(N_{1/n}(t)+at
\bigr), %
\]
where the exponential process $E_{1/n}$ and the Poisson process
$N_{1/n}(t)$ have the parameters $\beta=\lambda=1/n$, and, therefore,
the distribution of $X_n(t)$ in such a case is given by the formula
\eqref{PENady} with $\beta=\lambda=1/n$.

Correspondingly, in the case (ii) the process $X_n(t)$ coincides in
distribution with the process $\tilde{E}(\tilde{N}(t)+at)$, where the
exponential process $\tilde{E}(t)$ has parameter $\frac{\beta^n}{\gamma
(\lambda,\beta,n)}$, and the Poisson process $\tilde{N}(t)$ has
parameter $\frac{\lambda^n}{\gamma(\lambda,\beta,n)}$.
\end{remark}

%l4 #&#
%
\begin{lemma}\label{nu}
%Let
Assume the process $X_1(t)=E (at+N(t) )$, where $E(t)$ is an
exponential process with parameter $\gamma$, $N(t)$ is the Poisson
process with parameter $\gamma$, that is, $X_1(t)$ has the L\'evy measure
\[
\nu(du)=e^{-u \gamma} \bigl(u^{-1}a+\gamma^2 \bigr)du.
\]
Then the process
\[
X_2(t)=\tilde{E} \bigl(at+\tilde{N}\bigl(X_1(t)\bigr)
\bigr),
\]
where $\tilde{E}(t)$ is the exponential process with parameter $\alpha
$, $\tilde{N}(t)$ is the Poisson process with parameter $\alpha$, has
the L\'evy measure of the form:
%
%e53 #&#
%
\begin{equation}
\nu_2(dx)=e^{-x\frac{\alpha\gamma}{\alpha+\gamma}} \biggl(ax^{-1}+ \biggl(
\frac{\alpha\gamma}{\alpha+\gamma} \biggr)^2 \biggr)dx.
\end{equation}
\end{lemma}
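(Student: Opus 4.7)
The plan is to apply the formula \eqref{nuZE} for the Lévy measure of a process of the form $E(at+N(X(t)))$, with the inner subordinator taken to be $X_1(t)$ whose Lévy measure $\nu(du)=e^{-u\gamma}(au^{-1}+\gamma^{2})du$ is given in the hypothesis. Since $\tilde E$ and $\tilde N$ both have parameter $\alpha$, formula \eqref{nuZE} specializes to
\[
\nu_2(dx)=e^{-\alpha x}\Biggl(ax^{-1}+\int_0^\infty e^{-\alpha u}\sqrt{\alpha^2 u/x}\,I_1\bigl(2\alpha\sqrt{ux}\bigr)e^{-\gamma u}\bigl(au^{-1}+\gamma^2\bigr)du\Biggr)dx.
\]
First I would compute the inner integral explicitly. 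The key step is to expand the modified Bessel function as a power series and exchange the sum and integral, getting
\[
\sqrt{\alpha^2 u/x}\,I_1\bigl(2\alpha\sqrt{ux}\bigr)=\sum_{n=0}^{\infty}\frac{\alpha^{2n+2}u^{n+1}x^n}{n!(n+1)!}.
\]
Then each term gives a standard gamma integral in $u$ against $e^{-(\alpha+\gamma)u}$, producing factors $n!/(\alpha+\gamma)^{n+1}$ from the $a u^{n}$ contribution and $(n+1)!/(\alpha+\gamma)^{n+2}$ from the $\gamma^2 u^{n+1}$ contribution.

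Next I would recognize the two resulting sums in closed form. With $\eta:=\alpha^2 x/(\alpha+\gamma)$, the $\gamma^2$-piece collapses to $\gamma^2\alpha^2(\alpha+\gamma)^{-2}e^{\eta}$, while the $a$-piece telescopes to $ax^{-1}(e^{\eta}-1)$; after combining this with the standalone $ax^{-1}$ term inside the big brackets, the constant $-1$ cancels and one is left with $ax^{-1}e^{\eta}+\gamma^2\alpha^2(\alpha+\gamma)^{-2}e^{\eta}$. The final step is to combine exponentials: $-\alpha x+\eta=-\alpha\gamma x/(\alpha+\gamma)$, which yields exactly
\[
\nu_2(dx)=e^{-x\frac{\alpha\gamma}{\alpha+\gamma}}\biggl(ax^{-1}+\Bigl(\frac{\alpha\gamma}{\alpha+\gamma}\Bigr)^{\!2}\biggr)dx.
\]

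The main obstacle is essentially bookkeeping: keeping track of the two separate series (one producing an $x^{-1}$ singularity and one producing a constant) and ensuring that the cancellation between the extra $-ax^{-1}$ term coming from the $(e^{\eta}-1)$ and the leading $ax^{-1}$ term is correctly executed before collecting exponential factors. No deep analytic input beyond the power series for $I_1$ and $\int_0^\infty u^m e^{-(\alpha+\gamma)u}du=m!/(\alpha+\gamma)^{m+1}$ is required, so the result can also be viewed as the $n=2$ specialization of Theorem~\ref{thm7}(ii) (with $\lambda=\gamma$, $\beta=\gamma$ at the first stage and then $\alpha$ at the second), but a direct computation via \eqref{nuZE} is cleaner in this two-level inhomogeneous setting.
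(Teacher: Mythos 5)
Your proposal is correct and follows essentially the same route as the paper: both apply formula \eqref{nuZE} with $\lambda=\beta=\alpha$ and the given $\nu(du)$, expand $I_1$ as a power series, evaluate the resulting gamma integrals term by term, and resum the two series into exponentials so that $e^{-\alpha x}e^{\alpha^2 x/(\alpha+\gamma)}=e^{-x\alpha\gamma/(\alpha+\gamma)}$. The only difference is presentational — you make the cancellation of the $-ax^{-1}$ term explicit, which the paper leaves implicit in its final step.
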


\begin{proof}
Using the formula (\ref{nuZE}) with $\lambda= \beta= \gamma$ we obtain:
\begin{align*}
\nu_2(dx)&= e^{-\alpha x}ax^{-1}dx
\\
&\quad {}+e^{-\alpha x}\int_{0}^{\infty}
e^{\alpha u}\sqrt{\alpha^2 u x^{-1}}I_1
\bigl(2\sqrt{\alpha^2 u x} \bigr)e^{-u \gamma}
\bigl(u^{-1}a+\gamma^2 \bigr)du dx
\\
&=e^{-\alpha x}ax^{-1}dx
\\
&\quad {}+e^{-\alpha x}\sum_{j=0}^{\infty}
\frac{(\alpha^2 x)^j\alpha^2}{j!
(j+1)!}\int_{0}^{\infty}
e^{- u(\alpha+\gamma)}u^{j+1} \bigl(u^{-1}a+\gamma^2
\bigr)dudx
\\
&=e^{-\alpha x} \Biggl(ax^{-1}+\sum_{j=0}^{\infty}
\frac{(\alpha^2
x)^j\alpha^2}{j! (j+1)!} \biggl(a\frac{j!}{(\alpha+\gamma)^{j+1}}+\gamma ^2
\frac{(j+1)!}{(\alpha+\gamma)^{{j+2}}} \biggr) \Biggr)dx
\\
&=e^{-x\frac{\alpha\gamma}{\alpha+\gamma}} \biggl(ax^{-1}+ \biggl(\frac
{\gamma\alpha}{\alpha+\gamma}
\biggr)^2 \biggr)dx.
\qedhere
\end{align*}
\end{proof}

Generalizing the first statement of Remark~\ref{rem10}, we obtain the next
interesting result. Let the iterated process be constructed according
to the formula
(\ref{nuZE}) with the following parameters at each step:
%
%e54 #&#
%
\begin{eqnarray}
\label{Xn2} &X_0(t)=t,
\nonumber
\\
&X_1(t)=E_1\bigl(a t+N_1
\bigl(X_0(t)\bigr)\bigr), \quad \beta_1=
\lambda_1=\frac{1}{c_1}
\nonumber
\\
& \ldots
\\
&X_n(t)=E_n\bigl(a t+N_n
\bigl(X_{n-1}(t)\bigr)\bigr), \quad \beta_n=\lambda_n=
\frac
{1}{c_n}.
\nonumber
\end{eqnarray}
%
%l5 #&#
%
\begin{lemma}\label{lem5}
Let the process $X_n(t)$ be given by the iteration formula (\ref
{Xn2}). Then its L\'{e}vy measure and Laplace exponent are of the
following form:
%
%e55 #&#
%
\begin{equation}
\label{nu2} \nu_n(dx)=e^{-x\frac{1 }{c_1+\cdots+c_n}} \biggl(ax^{-1}+
\biggl(\frac{1
}{(c_1+\cdots+c_n)^2} \biggr) \biggr)dx,
\end{equation}
%
%e56 #&#
%
\begin{equation}
f_n(x)=a\log \Biggl(1+\sum_{1}^{n}c_ix
\Biggr)+\frac{x}{1+\sum_{1}^{n}c_ix}.
\end{equation}
Therefore, $X_n(t)$ coincides in distribution with the process
$E_{1/c}(N_{1/c}(t)+at)$, where $c=\sum_{1}^{n}c_i$.
\end{lemma}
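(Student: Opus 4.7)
The natural approach is induction on $n$, with Lemma~\ref{nu} providing exactly the inductive step in a form tailor-made for this purpose. The essential observation is that the Lévy measure asserted in \eqref{nu2} has precisely the shape
\[
\nu(du) = e^{-u\gamma}\bigl(a u^{-1} + \gamma^2\bigr)du
\]
that Lemma~\ref{nu} both assumes as input and produces as output, so this family of Lévy measures is closed under the iteration operation. This means a single induction will propagate without ever leaving the family.

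For the base case $n=1$, the process is $X_1(t) = E_1(at + N_1(t))$ with $\beta_1 = \lambda_1 = 1/c_1$, and its Lévy measure is obtained directly from \eqref{du} with $\alpha=1$, $\beta = \lambda = 1/c_1$, giving
\[
\nu_1(du) = e^{-u/c_1}\bigl(au^{-1} + 1/c_1^{2}\bigr)du,
\]
which matches \eqref{nu2} with $c = c_1$. For the inductive step, assume $X_{n-1}$ has Lévy measure of the claimed form with effective parameter $\gamma_{n-1} = 1/(c_1+\cdots+c_{n-1})$. Applying Lemma~\ref{nu} with $\gamma = \gamma_{n-1}$ and $\alpha = 1/c_n$, I obtain a Lévy measure of the same shape with new parameter
\[
\frac{\alpha\gamma}{\alpha+\gamma} = \frac{(1/c_n)\gamma_{n-1}}{1/c_n + \gamma_{n-1}} = \frac{\gamma_{n-1}}{1 + c_n\gamma_{n-1}} = \frac{1}{c_1+\cdots+c_n},
\]
which is exactly $1/(c_1+\cdots+c_n)$, closing the induction and establishing \eqref{nu2}.

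To obtain the Laplace exponent, I would compute $f_n$ directly from $\nu_n$ via $f_n(x) = \int_0^\infty (1-e^{-xu})\nu_n(du)$, splitting the integral into the $au^{-1}$ piece (which yields the logarithmic term via the standard identity $\int_0^\infty (1-e^{-xu})e^{-u/c}u^{-1}du = \log(1+cx)$) and the constant piece (which gives the rational term). Alternatively, one may simply recognize that $\nu_n$ coincides with the Lévy measure of $E_{1/c}(N_{1/c}(t)+at)$: indeed, formula \eqref{fGNa} with $\alpha=1$, $\beta = \lambda = 1/c$ reduces to $a\log(1+cx) + x/(1+cx)$, and the Lévy measure \eqref{du} with the same parameters gives exactly $\nu_n$. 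Since a Lévy process is characterized in distribution by its Bernstein function, the asserted distributional identity $X_n(t) \stackrel{d}{=} E_{1/c}(N_{1/c}(t)+at)$ follows immediately.

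The main obstacle is essentially absent once Lemma~\ref{nu} is in place: the entire argument rests on the algebraic fact that the harmonic-mean-like combination $\alpha\gamma/(\alpha+\gamma)$ translates under the reciprocal parametrization $c = 1/\gamma$ into simple addition $c_n + c_{n-1}$, which is what makes the iteration telescope cleanly. The only minor care needed is to verify that the shift coefficient $a$ is preserved identically through each application of Lemma~\ref{nu}, which is the reason for requiring a common drift parameter across all stages of the iteration \eqref{Xn2}.
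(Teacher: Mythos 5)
Your proposal is correct and follows essentially the same route as the paper: an induction on $n$ whose inductive step is exactly Lemma~\ref{nu} applied with $\alpha=1/c_{n+1}$ and $\gamma=1/(c_1+\cdots+c_n)$, the key point being that the family of L\'evy measures $e^{-u\gamma}(au^{-1}+\gamma^2)\,du$ is closed under the Bessel-transform iteration with the reciprocal parameters adding. The paper's proof is terser (it omits the base case, the explicit harmonic-mean computation, and the derivation of $f_n$ from $\nu_n$), but your added details are all correct and consistent with it.
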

\begin{proof}
By induction, we need to show that if (\ref{nu2}) holds at the $n$-th
step, then
%
%e57 #&#
%
\begin{equation}
\label{nun+1} \nu_{n+1}(dx)=e^{-x\frac{1 }{c_1+\cdots+c_{n+1}}} \biggl(ax^{-1}+
\biggl(\frac{1 }{(c_1+\cdots+c_{n+1})^2} \biggr) \biggr)dx.
\end{equation}
Using Lemma \ref{nu} with parameters $\alpha=\frac{1}{c_{n+1}}, \gamma
=\frac{1}{c_1+\cdots+c_{n}}$ we immediately come to~(\ref{nun+1}).
\end{proof}

We next consider the time-changed process $N_{\mu}(X_n(t))$, where
$X_n(t)$ is the process defined by the formula (\ref{Xn}), and $N_{\mu
}((t))$ is the Poisson process with parameter~$\mu$.

%t8 #&#
%
\begin{theorem}
The probabilities $p_k(t)=P \{N_\mu (X_n(t) )=k \}$
are solutions to the equation
%
%e58 #&#
%
\begin{equation}
\label{ddt} \frac{d}{dt}p_k(t)= - \sum
_{j=1}^{n}f_{n,j} \bigl(\mu (I-B )
\bigr)p_k(t)
\end{equation}
with the usual initial condition, where $f_{n,j}(u)$ are given by the
following formulas:
% \begin{equation}\label{fn}
% f_{n}(u)=\sum_{j=0}^{n}f_{n,j}(u),
% \end{equation}
% where

\noindent(i) if $\beta_i=\lambda_i=1, i=1,\xch{\ldots}{..},n$, then
\begin{eqnarray*}
f_{n,j}(u)&=&(a_{n-j}-a_{n-j-1})\log\bigl(1+(1+j)u
\bigr), \quad j=0,1,\ldots,n-1;
\\
f_{n,n}(u)&=&\frac{u}{1+nu};
\end{eqnarray*}

\noindent(ii) if $\beta_i=\beta\neq1,\lambda_i=\lambda\neq1, i=1,\xch
{\ldots}{..},n$, then
\begin{eqnarray*}
f_{n,j}(u)&=&(a_{n-j}-a_{n-j-1})\log \biggl(1+u
\frac{\gamma(\lambda,\beta
,j+1)}{\beta^{j+1}} \biggr),\quad  j=0,1,\ldots,n-1;
\\
f_{n,n}(u)&=&\frac{\lambda^n u}{\beta^n+u\gamma(\lambda,\beta,n)}.
\end{eqnarray*}
\end{theorem}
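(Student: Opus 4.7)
The plan is to recognize $N_\mu(X_n(t))$ as a time-changed Poisson process of the type $N^f$ studied in Section~\ref{sec2}, with time-change subordinator $X_n(t)$ whose Bern\v{s}tein function $f_n$ has already been identified in Theorem~\ref{thm7}. Since $X_n$ is a L\'evy subordinator (being built by iteration of independent L\'evy processes), the operator form of the governing equation, \eqref{dtpkf1}, applies verbatim with $\lambda$ replaced by $\mu$ and $f$ replaced by $f_n$, giving
\begin{equation*}
\frac{d}{dt} p_k(t) = -f_n\bigl(\mu(I-B)\bigr)\, p_k(t),
\end{equation*}
subject to the usual initial conditions $p_0(0)=1$ and $p_k(0)=0$ for $k\ge1$.

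The stated formula then follows by splitting $f_n$ into its elementary summands. From \eqref{fn1}, in case~(i) I would set $f_{n,j}(u) = (a_{n-j}-a_{n-j-1})\log\bigl(1+(1+j)u\bigr)$ for $j=0,1,\ldots,n-1$ and $f_{n,n}(u)=u/(1+nu)$, so that by construction $f_n(u) = \sum_{j=0}^{n} f_{n,j}(u)$; the analogous term-by-term reading of \eqref{fn3} in case~(ii) produces the second set of formulas. Linearity of the functional calculus $g\mapsto g(\mu(I-B))$ acting on sequences $(p_k(t))_{k\ge0}$ then distributes the operator $f_n(\mu(I-B))$ over this decomposition and yields the claimed identity.

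The only point that requires care is that each $f_{n,j}(\mu(I-B))$ be a bona fide operator on the sequences $(p_k(t))_{k\ge0}$ (with the standard convention $p_{-1}(t)\equiv0$). I would justify this by returning to the integral form \eqref{dtpkf}: the L\'evy measure $\nu_n$ from \eqref{nun1} or \eqref{nun3} splits manifestly as a sum of measures $\nu_{n,j}$, one attached to each summand $f_{n,j}$ of $f_n$, and the corresponding integrals $\int_0^\infty e^{-\mu s}s^m\,\nu_{n,j}(ds)$ all converge and can be summed over $m$. Substituting this split of the L\'evy measure term-by-term into the right-hand side of \eqref{dtpkf} recovers exactly $-\sum_j f_{n,j}(\mu(I-B))\,p_k(t)$, avoiding any direct manipulation of the logarithmic power series in $B$. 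I do not expect a serious obstacle here: the hard analytic work was already carried out in Theorem~\ref{thm7}, and the present theorem is essentially a corollary of that theorem combined with the general equation \eqref{dtpkf1}.
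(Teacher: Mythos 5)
Your proposal is correct and follows essentially the same route as the paper, whose proof is simply the observation that the claim follows from the operator form of the governing equation \eqref{dtpkf1} applied with the Bern\v{s}tein function $f_n$ computed in Theorem~\ref{thm7}, split term by term into the summands $f_{n,j}$. The additional care you take in justifying each $f_{n,j}(\mu(I-B))$ via the corresponding piece of the L\'evy measure is a reasonable elaboration but not something the paper spells out.
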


\begin{proof}
Proof follows from the formula \eqref{dtpkf1} and Theorem~\ref{thm7}.
\end{proof}

%r11 #&#
%
\begin{remark}
The equation (\ref{ddt}) can be also written in the form
%
%e59 #&#
%
\begin{equation}
\frac{d}{dt}p_{k}(t)=-f_n(\mu) p_{k}(t)+
\sum_{m=1}^{k}\frac{\mu^{m
}}{m!}p_{k-m}(t)
\int_{0}^{\infty}e^{-s\mu}s^m
\nu_n(ds), \quad  k\geq0, \, t>0,
\end{equation}
where $f_n(u)$ and $\nu_n(du)$ are given in Theorem~\ref{thm7}.
\end{remark}
%
%r12 #&#
%
\begin{remark}
In \cite{Sato,W,W2001} the process of the following form was
considered: $G(at+N(X(t)))$, where $G(t)$ is the Gamma process with
parameters $(1,1)$, that is, the process with probability density $
(\varGamma(t) )^{-1}e^{-x}x^{t-1}$ and the L\'{e}vy measure $\nu
(du)=u^{-1}e^{-u}du$ (actually, the exponential process), and $N(t)$ is
the Poisson process with parameter 1. Such a process is called the
Bessel transform of the process $X(t)$.
In the present paper we suppose that the process $G(t)$ has parameters
$(1,\beta)$ (that is, it is the exponential process with parameter
$\beta$), and the Poisson process has parameter $\lambda$ and we
consider $E_\beta (at+N_\lambda(X(t)) )$. Let us call such a
transform of the process $X(t)$, with more general parameters, the
Bessel transform as well, and denote it by
\[
B\bigl(X(t)\bigr)=B^{\beta,\lambda}\bigl(X(t)\bigr)=E \bigl(at+N\bigl(X(t)\bigr)
\bigr).
\]
Then the process $X_n(t)$, which is given by \eqref{Xn}, we can
represented as $n$-th iteration of Bessel transforms:
\[
X_n(t)=\underbrace{B_n\bigl(B_{n-1}\bigl(\ldots
B_1}_n\bigl(X_0(t)\bigr)\bigr)\bigr),
\]
where $X_0(t)=t$,
or
\[
X_n(t)=\underbrace{B_{n-1}\bigl(B_{n-2}\bigl(
\ldots B_1}_{n-1}\bigl(E \bigl(at+N(t) \bigr)\bigr)\bigr)
\bigr),
\]
where $B_i$ are Bessel transforms with parameters $\beta_i$, $\lambda_i$.

In the particular case, when $\beta_i=\lambda_i=1/c_i$, we obtain:
\[
{B_n\bigl(B_{n-1}\bigl(\ldots B_1}\bigl((t)
\bigr)\bigr)\bigr)\stackrel{d} {=}\tilde {B}(t)=E_{1/c}
\bigl(N_{1/c}(t)+at\bigr),
\]
where $c=\sum_{1}^{n}c_i$ (see Lemma~\ref{lem5}). We also have:
%
%e60 #&#
%
\begin{equation}
\label{exam1}N_\mu ({B_n^{1/c_n}
\bigl(B_{n-1}^{1/c_{n-1}}\bigl(\ldots B_1^{1/c_1}}
\bigl((t)\bigr)\bigr)\bigr)\stackrel {d} {=}N_\mu\bigl(B^{1/c}(t)
\bigr)=N_\mu\bigl(E_{1/c}\bigl(N_{1/c}(t)+at\bigr)
\bigr),
\end{equation}
where $c=\sum_{1}^{n}c_i$.

When the shift $a=0$, we recover the result stated in Remark 4 of \cite
{BS}, concerning the Poisson process with iterated time change, where
the role of time is played by the compound Poisson-exponential process
$\widetilde{E}
_{N}^{1/c}(t)=E_{1/c}(N_{1/c}(t)$ with the Laplace exponent $\tilde
{f}_{c}(u)=\frac{u}{1+cu}$. Namely, if we denote $\widetilde
{N}^{1/c}(t)=N(\widetilde{E}
_{N}^{1/c}(t))$, then the following holds:
%
%e61 #&#
%
\begin{equation}
\label{exam2} \widetilde{N}^{1/c_{1}}\bigl(\widetilde{E}_{N}^{1/c_{2}}
\bigl(\ldots\widetilde {E}%
_{N}^{1/c_{n}}(t)\ldots
\bigr)\bigr)\stackrel{d} {=}\widetilde{N}^{1/\sum
_{i=1}^{n}c_{i}}(t).
\end{equation}
The similar property with respect to iterated time change was
discovered previously for the time-changed Poisson processes where the
time is expressed by stable subordinators with Laplace exponent
$f(u)=u^\alpha$ (and called the auto-conservative property)\vadjust{\eject} in the
papers \cite{GOS,OP}. Two more cases of the iterated time change which
preserves the structure of the process are provided by the above
examples \eqref{exam1}, \eqref{exam2}.
\end{remark}

%\begin{appendix}
%\end{appendix}

\begin{acknowledgement}[title={Acknowledgments}]
The authors are grateful to the referees for their valuable remarks and
suggestions which helped to improve the paper.
\end{acknowledgement}

%\begin{funding}
%\gsponsor[id=,sponsor-id=]{}
%\gnumber[refid=]{}
%\end{funding}

\end{document}